\colorlet{texcscolor}{blue!50!black}
\colorlet{texemcolor}{red!70!black}
\colorlet{texpreamble}{red!70!black}
\colorlet{codebackground}{black!25!white!25}
\newtheorem{remark}[theorem]{Remark}
\def\bk{{\bf{k}}}
\def\bi{{\bf{i}}}
\def\bj{{\bf{j}}}
\lstdefinestyle{siamlatex}
{%
  style=tcblatex,
  texcsstyle=*\color{texcscolor},
  texcsstyle=[2]\color{texemcolor},
  keywordstyle=[2]\color{texemcolor},
  moretexcs={cref,Cref,maketitle,mathcal,text,headers,email,url},
}
\DeclareTotalTCBox{\code}{ v O{} }
{ 
  fontupper=\ttfamily\color{black},
  nobeforeafter,
  tcbox raise base,
  colback=codebackground,colframe=white,
  top=0pt,bottom=0pt,left=0mm,right=0mm,
  leftrule=0pt,rightrule=0pt,toprule=0mm,bottomrule=0mm,
  boxsep=0.5mm,
  #2}{#1}
\patchcmd\newpage{\vfil}{}{}{}
\title{Hypocoercivity Based Sensitivity Analysis and Spectral Convergence of the Stochastic Galerkin Approximation to Collisional Kinetic Equations with Multiple Scales and Random Inputs 
\thanks {Submitted to the editors April 6, 2018}
\funding{{Both authors were partially supported by NSF grants
DMS-1522184 and DMS-1107291: RNMS KI-Net,
and by the Office of the Vice Chancellor for Research and
Graduate Education at the University of Wisconsin-Madison with funding from the Wisconsin
Alumni Research Foundation. The first author was also partially supported by DOE--Simulation Center for Runaway Electron Avoidance and Mitigation. 
}}}
\author{Liu Liu\thanks{The Institute for Computational Engineering and Sciences (ICES), University of Texas at Austin, Austin, Texas 78705, USA (\email{lliu@ices.utexas.edu}).}
\and Shi Jin\thanks{Department of Mathematics, University of Wisconsin-Madison, Madison, WI 53706, USA
(\email{sjin@wisc.edu}). }}
\begin{document}
\maketitle
\begin{tcbverbatimwrite}{tmp_\jobname_abstract.tex}
\begin{abstract}
  In this paper we provide a general framework to study general class of linear
  and nonlinear kinetic equations with random uncertainties from the initial
  data or collision kernels, and their stochastic Galerkin approximations,
  in both incompressible Navier-Stokes and Euler (acoustic) regimes.
  First, we show that the general framework put forth in [C. Mouhot and
  L. Neumann, {\it Nonlinearity}, 19, 969-998, 2006; 
  M. Briant, {\it J. Diff. Eqn.},
  259, 6072-6141, 2005]
 based on hypocoercivity for the deterministic kinetic
  equations can be easily adopted for sensitivity
  analysis for random kinetic equations, which gives rise to an exponential
  convergence of the random solution toward the (deterministic) global equilibrium, under suitable conditions on the collision kernel.
  Then we use such theory to study the stochastic Galerkin (SG) methods
  for the equations, establish hypocoercivity of the SG system and regularity of its solution,
  and spectral accuracy and exponential decay in time of the
  numerical error of the method in a weighted Sobolev norm.
\end{abstract}

\begin{keywords}
kinetic equations with uncertainties, sensitivity analysis, hypocoercivity, multiple scales, gPC stochastic Galerkin 
\end{keywords}

\begin{AMS} 
35Q20, 65M70
\end{AMS}
\end{tcbverbatimwrite}
\input{tmp_\jobname_abstract.tex}

\section{Introduction}
 Consider the initial value problem for kinetic equations of the form
 \begin{align}
\label{model}
\left\{
\begin{array}{l}
  \displaystyle \partial_t f + \frac{1}{\epsilon^\alpha} v\cdot\nabla_x f=
  \frac{1}{\epsilon^{1+\alpha}} \mathcal Q(f),  \\[4pt]
\displaystyle  f(0,x,v,z)=f_{in}(x,v,z), \qquad x\in\Omega\subset\mathbb T^d,  \, v\in \mathbb R^d, z\in I_z \subset \mathbb R,
\end{array}\right.
\end{align}
where $f(t,x,v,z)$ is the distribution of particles in the phase space depending
on time $t$, particle position $x$, velocity $v$ and a random variable
$z$, and $d\geq 1$ denotes the dimension of the spatial and velocity spaces.
$z$ is a random variable that lies in domain $I_z\subset \mathbb R $.
The operator $\mathcal Q$ models the collisional interactions of particles, which is either binary or between particles against a
surrounding medium. $\epsilon$ is the Knudsen number, the dimensionless
ratio of particle mean free path over the domain size. $\alpha=1$ is referred
to the incompressible Navier-Stokes scaling, while $\alpha=0$ corresponds
to the Euler (or acoustic in this article) scaling.
The periodic boundary condition for the spatial domain $\Omega=\mathbb T^d$ is assumed here.

The main goal of this paper is to study the above kinetic equation
and its numerical approximation
under the influence of {\it random uncertainty}.
Since kinetic equations are not  first-principle physical equations,
there are inevitably modeling errors,
incomplete knowledge of the interaction mechanism, and imprecise measurement of the initial and boundary data,
which contribute uncertainties to the equations. Understanding the impact of these uncertainties
is crucial to the simulation and validation of the models, in order to
provide more reliable predictions, calibrations and improvements of the models.
In this paper we consider the uncertainty coming from initial data and collision kernels.
The uncertainty is described by the random variable $z$, which lies in the random space
$I_z$ with a probability measure $\pi(z)dz$, then the solution $f=f(t,x,v,z)$ depends on $z$.
The {\it sensitivity analysis} aims  to study how randomness of the initial data and collision kernel (the ``input'') propagates in time and how it affects the solution in the long time (the ``output'') \cite{SmithBook}.
It is an essential part for the so-called {\it uncertainty quantification} for
kinetic equations.

For a general class of linear collisional kinetic models in the torus without
uncertainty variable $z$, including the linearized Boltzmann equation for hard spheres,
the linearized Landau equation with hard and moderately soft potentials and the semi-classical linearized fermionic and bosonic relaxation models,
based on the hypocoercivity theory established by Mouhot and Neumann \cite{CN},
Briant \cite{MB} proved explicit coercivity estimates for some modified Sobolev norms on the associated integro-differential operator. For the full
nonlinear models including the Boltzmann, Landau and semi-classical relaxation model of quantum Boltzmann equation,  \cite{MB} deduced the existence of classical solutions near the global equilibrium and obtained explicit estimates on the exponential
convergence rate towards equilibrium. We first show that this general hypocoecivity theory can be easily adopted for the uncertain kinetic equation (\ref{model}) to obtain a similar theory of convergence to the (deterministic) global
equilibrium, in a weighted Sobolev norm including the
random space. For the case of random initial data, the analysis
is basically the same as those in \cite{MB} except one has to check that
the estimate constants are independent of $z$ and the estimates need to be extended
for the high-order derivatives in $z$.  
When the collision kernel is random with bounded $z$-derivatives, 
for the Boltzmann equation, 
while the nonlinear portion of the collision operator needs
a slight generation to include the high order derivative of the collision kernel in $z$, 
we adopt the ideas in \cite{Ma, Liu} with some new estimates for the linearized collision operator. 
The results
show that the impact of the random uncertainty will
diminish in time, namely the long time solution is insensitive to the random
perturbation of the initial data and the collision kernel, for both the
incompressible Navier-Stokes and acoustic scalings.

To numerically solve such equations with uncertainties, one of the standard
and efficient numerical methods is the generalized polynomial chaos approach in the stochastic Galerkin (referred as gPC-SG)
framework \cite{Ghanem, GWZ, Xiu, XZJ, Hu, HuReview}.
Compared with the classical Monte-Carlo method, the gPC-SG approach enjoys a spectral accuracy in the random space--
if the solution is sufficiently smooth--while the Monte-Carlo method converges with only half-th order accuracy.
In the second part of this paper, by extending the hypocoercivity
analysis to the gPC-SG system, using a weighted norm first introduced by Shu
and Jin \cite{Rui}, we prove the exponential decay in time toward the global equilibrium
and the spectral accuracy of the gPC-SG approximation in both incompressible
Navier-Stokes and acoustic scalings,
under the assumption of small $\mathcal O(\epsilon)$ random perturbation to the collision kernel and
boundedness of the random domain $I_z$, with some additional assumption
for the orthogonal polynomials used in the gPC approximation.

While uncertainty quantification has been a hot topic in scientific and engineering computing in the last two decades,
research on uncertainty quantification for kinetic equations has been relatively recent. We refer to recent review articles \cite{HuReview, DPZ} and
some recent works \cite{XZJ, Hu, DesPer, Jin-Liu, Ma, Mu-Liu, QinWang, Liu, Jin-Lu, Zhu, Rui1, Rui, APZ} in this direction. The first sensitivity analysis similar
to this paper for the linear
transport equation, with uniform (in the Knudsen number) spectral convergence
of the gPC-SG approximation, was given by Jin, Liu and Ma in \cite{Ma}.
For similar theory for linear transport equation with anisotropic
collision kernel, see \cite{Jin-Liu, Liu}. Uniform regularity for general
linear transport equations conserving mass, based on hypocoercivity
established in \cite{DMS},
was obtained by Li and Wang in \cite{QinWang}. The first regularity
for a nonlinear kinetic equation, the Vlasov-Poisson-Fokker-Planck system
with random initial data in both high-field and parabolic regimes, was established by Jin and Zhu \cite{Zhu}.
Shu and Jin obtained uniform regularity and spectral convergence of the gPC-SG
system to a nonlinear Fokker-Planck-incompressible Navier-Stokes
system with uncertain initial data in \cite{Rui}.  In this paper, not only
were our results new on the Boltzmann equation with uncertainties in both
the continuous and the discrete gPC-SG equations, we also give
a {\it unified} approach,
for both incompressible Navier-Stokes (diffusive) and acoustic
(Euler) scalings, which applies to a wide class of  both linear and nonlinear
kinetic (such as Boltzmann, Landau, the relaxation model of quantum Boltzmann)
equations with uncertainties in initial data and collision kernels.

As we finish this manuscript, Zhu announced similar convergence results for the Boltzmann equation with random initial data and the Euler scaling,
and the stability and regularity of its stochastic Galerkin approximation
\cite{Zhu-B}, using techniques specific to the Boltzmann equation in the whole-space case \cite{Guo-whole, Duan}. 

This paper is organized as follows. Section \ref{Section1} provides
the theoretical framework and  hypocoercivity assumptions for general kinetic models, and the results on exponential decay to the global equilibrium. The proof of some of the convergence results are given in section \ref{Proof_Thms}. 
In section 4, we prove that the theoretical results of
section \ref{Section1} are valid for the Boltzmann equation with both
random initial data and random collision kernel. 
Section \ref{gPC} proves the hypocoercivity, and exponential time decay of the solution of the gPC-SG approximation to the uncertain Boltzmann equation,
with numerical  error shown to be spectrally accurate and exponentially decaying in time. The paper is concluded in section \ref{conclusion}.

\section{General Framework and Convergence to the Global Equilibrium}
\label{Section1}

In this section, we describe the abstract framework and assumptions of the hypocoercivity theory, introduced in \cite{CN, MB}, extend them to include the
random dependence, and then give the results about
convergence toward global equilibrium for the nonlinear kinetic equations
with uncertainty.
The results are stated for the case of random initial data. The case of random collision kernel can be included
in the same framework. See subsection \ref{kernel} for the Boltzmann equation.

\subsection{Theoretical Framework: Perturbative Setting and Small Scalings}
In the sequel $\mathcal L$ is used for both the linear models and the linearized models for nonlinear equations such as Boltzmann, Landau or semi-classical relaxation models, etc.
Suppose $g\in L^2(\Omega\times{\mathbb R^d})$ solves the linear kinetic equation
\begin{equation}\label{linear}\partial_t g + \frac{1}{\epsilon^\alpha} v\cdot\nabla_x g=\frac{1}{\epsilon^{1+\alpha}} \mathcal L(g)\,, \end{equation}
where $\mathcal L$ is a linearized collision operator depending on the precise form of the collision operator $\mathcal Q$.
As summarized in \cite{MM}, the idea is to employ the hypocoercivity of the linearized operator
\[\mathcal G=\frac{1}{\epsilon^{1+\alpha}}\mathcal L-\frac{1}{\epsilon^{\alpha}}
  \mathcal T\,,
\]
where
$\mathcal T=v\cdot\nabla_x$  is the streaming operator, using the dissipative properties of $\mathcal L$ and the conservative properties of $\mathcal T$.
The aim is to find a functional $\eta [g]$ which is equivalent to the square of the norm of a Banach space,
for example
\[H_{x,v}^1=\{g\, |\int_{\Omega\times\mathbb R^d}\, \sum_{|i|+|j|\leq 1} ||\partial_{x_i} \partial_{v_j} g ||_{L^2_{x,v}}^2\, dxdv<\infty
  \}, 
\]
in which $L_{x,v}^2 = \{ g\, | \int_{\Omega\times\mathbb R^d} \, g^2 \, dxdv < \infty\}$, such that $$\kappa_1 ||g||_{H_{x,v}^1} \leq \eta[g] \leq \kappa_2 ||g||_{H_{x,v}^1}, \qquad \text{for  } g\in H_{x,v}^1, $$
which leads to $$\frac{d}{dt}\, \eta[g(t)] \leq -\kappa ||g(t)||_{H_{x,v}^1}, \qquad t>0, $$
with constants $\kappa_1$, $\kappa_2$, $\kappa>0$. Then one concludes the exponential convergence of $g$ in $H^1_{x,v}$.
The obvious choice of $\eta[g]= c_1\, ||g||_{L^2_{x,v}}^2 + c_2\, ||\nabla_x g||_{L^2_{x,v}}^2 + c_3\, ||\nabla_v g||_{L^2_{x,v}}$ does not work.
The key idea, first seen in \cite{Villani} and implemented in \cite{CN}, is to add the ``mixing term"
$c\langle\nabla_x g, \, \nabla_v g\rangle_{L^2_{x,v}}$ to the definition of $\eta[g]$, that is
$$\frac{d}{dt}\langle\nabla_x g, \, \nabla_v g\rangle_{L^2_{x,v}} =
-||\nabla_x g||_{L^2_{x,v}}^2 + 2\langle\nabla_x\mathcal L(g), \, \nabla_v g\rangle_{L^2_{x,v}}\,.$$

It was proved in \cite{CN} that if the linear operator $\mathcal L$ satisfies some assumptions, then 
$\mathcal L -v\cdot\nabla_x$ generates a strongly continuous evolution semi-group $e^{t\mathcal{G}}$ on $H_{x,v}^s$, which satisfies
\begin{equation}||e^{t\mathcal{G}}(\mathbb I-\Pi_{\mathcal G})||_{H^s_{x,v}} \leq C \exp[-\tau t], \label{CN_thm1} \end{equation}
for some explicit constants $C, \, \tau>0$ depending only on the constants determined by the equation itself.
This result shows that apart from $0$, the spectrum of $\mathcal G$ is included in
$$\{\xi \in\mathbb C: \text{Re}(\xi)\leq -\tau\}. $$

{\bf The Perturbative setting: }
Equations defined in (\ref{model}) admit a unique global equilibrium in the torus, denoted by $\mathcal M$ which is independent of $t,x$.
Now consider the linearization around this equilibrium and perturbations of the solution of the form
\begin{equation}
  f=\mathcal M+ \epsilon M h\,, 
  \label{per-f}
  \end{equation}
with $$\mathcal M=\frac{1}{(2\pi)^{\frac{d}{2}}}\, e^{-\frac{|v|^2}{2}}\,,$$
and $M=\sqrt{\mathcal M}$. 
The linear (or linearized) operator $\mathcal L$ is acting on $L^2_v=\{f\, |\int_{\mathbb R^d} f^2\, dv <\infty\}$, 
with the kernel denoted by $N(\mathcal L)=\text{Span}\{\varphi_1, \cdots, \varphi_n\}$.
$\{\varphi_i\}_{1\leq i\leq n}$ is an orthonormal family of polynomials in $v$ corresponding to the manifold of local equilibria for the linearized kinetic models.
The orthogonal projection on $N(\mathcal L)$ in $L^2_v$ is defined by
\begin{equation}\label{Pi}\Pi_{\mathcal L} (h) = \sum_{i=1}^{n}\, \left(\int_{\mathbb R^d} h\varphi_i\, dv\right)\varphi_i, \end{equation}
where $\Pi_{\mathcal L}$ is the projection on the `fluid part' and $\mathbb I-\Pi_{\mathcal L}$ is the projection on the kinetic part,
with $\mathbb I$ the identity operator.
 The global equilibrium is then
\begin{equation}\label{PiG}\Pi_{\mathcal G} (h) =\sum_{i=1}^{n}\, \left(\int_{\mathbb T^d\times{\mathbb R^d}}h\varphi_i\, dx dv\right)\varphi_i,
\end{equation}
which is independent of $x$ and $t$ and is the orthogonal projection on $L^2_{x,v}$. \\[1pt]

{\bf Small scalings and main idea for the full equation: }
With the small scaling $\epsilon$, the problem becomes more interesting and challenging.
The project was initiated by Bardos, Golse and Levermore \cite{Bardos1, Bardos2} to derive the fluid limits which include incompressible
Navier-Stokes, compressible Euler equations and acoustic system from the DiPerna-Lions renormalized solutions \cite{Lions}.
See for example \cite{GSR, LevMas}. Here we will study the solution
in the perturbative setting (\ref{per-f}), which gaurantees that the solution
will be classical, thus allows one to conduct estimates in the Sobolev space
\cite{Guo-NS}.
\cite{MB} considers the kinetic  equation (\ref{model}) with the incompressible Navier-Stokes scaling ($\alpha=1$). With (\ref{per-f}),  $h$ satisfies
\begin{equation}\partial_t h + \frac{1}{\epsilon}v\cdot\nabla_x h=\frac{1}{\epsilon^2}\mathcal L(h)+\frac{1}{\epsilon}\mathcal F(h,h)\,. 
  \label{INS-scaling}
\end{equation}

Due to the small scaling, if one directly applies the estimates in \cite{CN}, typically the $v$-derivatives contribute to the energy norm by
a factor of $1/\epsilon$.
This prevents one from having a uniform exponential decay for the $v$-derivatives.
As initiated by Guo \cite{Guo-NS}, one needs to study the $v$ derivatives of the microscopic part of
the solution $h$. This allows \cite{MB} to construct a new energy norm to capture the structure of $\mathcal L$ on its orthogonal part, which, when combined with the previous
strategy, leads to a uniform exponential decay for solutions close to the global equilibrium.
The result is uniform in $\epsilon$, thus gives a strong convergence in time to the incompressible Navier-Stokes equations as $\epsilon$ goes to zero,
under some assumptions on the initial conditions.
\cite{MB} also gives the proof of existence of solutions close to the global equilibrium.

Another important scaling is the compressible Euler (or acoustic) scaling
($\alpha=0$), in which $h$ solves
\begin{equation}
  \label{Euler-scaling}
  \partial_t h + v\cdot\nabla_x h=\frac{1}{\epsilon}\mathcal L(h)+\mathcal F(h,h)\,. 
  \end{equation}
The authors in \cite{Jang09, LevMas, Levermore, Jang-Guo1,AMUXY} studied the acoustic limit of the Boltzmann equation in the framework of classical solutions of the form (\ref{per-f}).
They established the global-in-time,
uniform-in-$\epsilon$ energy estimates for the perturbated solution $h$ and proved its strong convergence to the distribution function whose dynamics
is governed by the acoustic system, which is the linearization of the homogeneous state of the compressible Euler system.
Furthermore, \cite{Jang-Guo2} studied the compressible Euler limit of the Boltzmann equation by using the local Hilbert expansion around the local
equilibrium for smooth solutions, which was first done by Caflisch in \cite{Caflisch}.

In this paper, we will focus on the incompressible Navier-Stokes and the
acoustic scaling for solutions of the form (\ref{per-f}).

\subsection{Hypocoercivity Assumptions}
\label{sec1}
We first discuss the following assumptions. 

{\bf Assumptions on the linear operator $\mathcal L$ in $H^1_{x,v}$: } \\
{\bf H1}. $\mathcal L: L^2=L^2(\mathbb T^d \times{\mathbb R^d})$ is closed, self-adjoint on $L^2_v$ and local in $t,x$.
$\mathcal L$ has the form $\mathcal L=K-\Lambda$. There is a norm $||\cdot||_{\Lambda_v}$ on $\mathbb R^d$, such that $\forall\,  h\in L^2_v$,
$\Lambda$ satisfies the {\it coercivity condition}: \begin{equation}\label{H1_1}\nu_0^{\Lambda}\,  ||h||_{L^2_v}^2 \leq \nu_1^{\Lambda}\, ||h||_{\Lambda_v}^2 \leq \langle \Lambda(h),\,h\rangle_{L^2_v}\leq \nu_2^{\Lambda}\,  ||h||_{\Lambda_v}^2\,, \end{equation}
and $\forall\, h\in H^1_{v}$,
\begin{equation}\label{H1}\langle \nabla_v \Lambda(h), \,\nabla_v h\rangle_{L^2_v} \geq \nu_3^{\Lambda}\, ||\nabla_v h||_{\Lambda_v}^2 - \nu_4^{\Lambda}\, ||h||_{L^2_v}^2\,,  \end{equation}
where $(\nu_s^{\Lambda})_{1\leq s\leq 4}>0$ are constants depending on the operators and the velocity space.
One further assumes that $\forall\, h, g\in L^2_v$,
\begin{equation}\label{LL}\langle\mathcal L(h), \, g\rangle_{L^2_v}\leq C^{\mathcal L}\, ||h||_{\Lambda_v}||g||_{\Lambda_v}\,.\end{equation}
{\bf H2}. $K$ has a regularizing effect. $\forall\, \delta>0$, there exists some explicit constant $C(\delta)>0$ such that $\forall\, h\in H_v^1$,
\begin{equation}\label{H2}\langle\nabla_v K(h), \, \nabla_v h\rangle_{L^2_v} \leq C(\delta)\, ||h||_{L^2_v}^2 + \delta\, ||\nabla_v h||_{L^2_v}^2\,.
\end{equation}
{\bf H3}. $\mathcal L$ has a finite dimensional kernel $$N(\mathcal L)=\text{Span}\{\varphi_1, \cdots, \varphi_n\}.$$
$\Pi_{\mathcal L}(h)$ given in (\ref{Pi}) is the orthogonal projection in $L^2_v$ on $N(\mathcal L)$. $\mathcal L$ has the {\it local coercivity property}:
There exists $\lambda>0$ such that $\forall\, h\in L^2_v$,
\begin{equation}\label{coercivity}\langle\mathcal L(h),\, h\rangle_{L^2_v} \leq -\lambda\, ||h^{\perp}||_{\Lambda_v}^2\,, \end{equation}
where $$h^{\perp}=h-\Pi_{\mathcal L}(h)$$ stands for the {\it microscopic part} of $h$, which satisfies $h^{\perp}\in N(\mathcal L)^{\perp}$
in $L_v^2$.

To extend to higher-order Sobolev spaces, let us first introduce some notations of multi-indices and Sobolev norms.
For two multi-indices $j$ and $l$ in $\mathbb N^{d}$, define $$\partial_l^j = \partial/\partial v_j\, \partial/\partial x_l. $$
For $i\in\{1, \cdots, d\}$, denote by $c_i(j)$ the value of the $i$-th coordinate of $j$ and by $|j|$ the $l^1$ norm of the multi-index, that is,
$|j|=\sum_{i=1}^d c_i(j)$. Define the multi-index $\delta_{i_0}$ by: $c_i(\delta_{i_0})=1$ if $i=i_0$ and $0$ otherwise.
We use the notation $$\partial_z^{\alpha} h = \partial^{\alpha}h. $$
Denote $||\cdot||_{\Lambda}:= ||\, ||\cdot||_{\Lambda_v}\, ||_{L^2_x}$.
The Sobolev norms on $H^{s}_{x,v}$ and $H_{\Lambda}^s$ are defined by
$$||h||_{H_{x,v}^s}^2 = \sum_{|j|+|l|\leq s}\, ||\partial_l^j h||_{L^2_{x,v}}^2\,,\qquad
||h||_{H_{\Lambda}^s}^2 = \sum_{|j|+|l|\leq s}\, ||\partial_l^j h||_{\Lambda}^2\,.$$
Define the sum of Sobolev norms of the $z$ derivatives by
\begin{align*}
&\displaystyle  ||h||_{H_{x,v}^{s,r}}^2 = \sum_{|m|\leq r}\, ||\partial^m h||_{H_{x,v}^s}^2\,, \qquad\qquad
||h||_{H_{\Lambda}^{s,r}}^2 = \sum_{|m|\leq r}\, ||\partial^m h||_{H_{\Lambda}^s}^2\,, \\[2pt]
&\displaystyle ||h||_{H_{x}^{s,r}L_v^2}^2 = \sum_{|m|\leq r}\, ||\partial^m h||_{H_{x}^s L_v^2}^2\,.
\end{align*}
Note that these norms are all functions of $z$. Define the norms in the $(x,v,z)$ space
$$||h||_{H^{s}_{x,v}H_z^r}^2 = \int_{I_z}\, ||h||_{H_{x,v}^{s,r}}^2\, \pi(z)\, dz\,,$$
in addition to the $\sup$ norm in $z$ variable,
\begin{equation}\label{h_sup} ||h||_{H_{x,v}^{s,r} L_z^{\infty}}=\sup_{z\in I_z}\, ||h||_{H_{x,v}^{s,r}}\,, \qquad
||h||_{H_{x,v}^s L_z^{\infty}}=\sup_{z\in I_z}\, ||h||_{H_{x,v}^s}\,. 
\end{equation}

{\bf Assumptions on the linear operator $\mathcal L$ in $H^s_{x,v}$,\, $s>1$: }\\
$\bf {H1^{\prime}}$. For all $s\geq 1$, $|j|+|l|=s$ such that $|j|\geq 1$,
$$\forall h\in H^{s}, \qquad \langle \partial_l^j \Lambda(h),\, \partial_l^j h\rangle_{L^2_{x,v}} \geq \nu_5^{\Lambda}\,
||\partial_l^j h||_{\Lambda}^2 - \nu_6^{\Lambda}\, ||h||_{H^{s-1}_{x,v}}^2\,. $$
$\bf {H2^{\prime}}$. For all $s\geq 1$, for all $|j|+|l|=s$ such that $|j|\geq 1$ and for any $\delta>0$, there exists an explicit $C(\delta)$ such that
$\forall h\in H_{x,v}^s$,
$$\langle\partial_l^j K(h), \, \partial_l^j h\rangle_{L^2_{x,v}} \leq C(\delta)\, ||h||_{H^{s-1}_{x,v}}^2 + \delta ||\partial_l^j h||_{L^2_{x,v}}^2\,.$$
{\bf H4. Orthogonality to $N(\mathcal L)$:}
$$\forall\, h, g \in \text{Dom}(\mathcal F)\cap L_v^2, \qquad \mathcal F(g,h)\in N(\mathcal L)^{\perp}, $$
where $\text{Dom}(\mathcal F)$ stands for the domain of the operator $\mathcal F$.

Due to the uncertainties introduced to the system, we also make the following assumption on the nonlinear term,
which is slightly different from \cite{MB}.  \\[1pt]

{\bf Assumptions on the nonlinear term $\mathcal F$: }\\
{\bf H5}. $\mathcal F: L^2_v \times L^2_v \to L^2_v$ is a bilinear symmetric operator such that for
all multi-indexes $j$ and $l$ such that $|j|+|l|\leq s$, $s\geq 0$, $m\geq 0$,
\begin{align*}
\left| \langle\partial^{m}\partial_l^j \mathcal F(h,h), \, f\rangle_{L^2_{x,v}}\right| \leq
\begin{cases}
&\displaystyle\mathcal G_{x,v,z}^{s,m}(h,h)\, ||f||_{\Lambda}\,, \qquad \text{if   }  j \neq 0, \\[2pt]
&\displaystyle \mathcal G_{x,z}^{s,m}(h,h)\, ||f||_{\Lambda}\,,  \qquad\text{if   } j=0.
\end{cases}
\end{align*}
Sum up $m=0, \cdots, r$,
then $\exists\, s_0\in \mathbb N, \, \forall s\geq s_0$, there exists a $z$-independent $C_{\mathcal F}>0$ such that for all $z$,
\begin{align*}
&\displaystyle\sum_{|m|\leq r}\, (\mathcal G_{x,v,z}^{s,m}(h,h))^2  \leq C_{\mathcal F}\, ||h||_{H^{s,r}_{x,v}}^2\, ||h||_{H_{\Lambda}^{s,r}}^2, \\[4pt]
&\displaystyle\sum_{|m|\leq r}\, (\mathcal G_{x,z}^{s,m}(h,h))^2 \leq C_{\mathcal F}\, ||h||_{H_{x}^{s,r}L^2_{v}}^2\, ||h||_{H_{\Lambda}^{s,r}}^2\,.
\end{align*}

\subsection{Convergence to the Global Equilibrium}
\label{Thms}
Define a positive functional on $H_{x,v}^{s}$, with a dependence on $\epsilon$,
$$||\cdot||_{\mathcal H_{\epsilon}^s}^2 = \sum_{|j|+|l|\leq s, |j|\geq 1} \epsilon^2\, b_{j,l}^{(s)}\, ||\partial_l^{j}\cdot||_{L^2_{x,v}}^2
+ \sum_{|l|\leq s}\alpha_l^{(s)}\, ||\partial_l^{0}\cdot||_{L^2_{x,v}}^2 + \sum_{|l|\leq s,\, i, c_i(l)>0} \epsilon\, a_{i,l}^{(s)}\,
\langle\partial_{l-\delta_i}^{\delta_i}\cdot, \,\partial_l^{0}\cdot\rangle_{L^2_{x,v}}\,, $$
and the Sobolev norms
$$ ||h||_{\mathcal H_{\epsilon}^{s,r}}^2 = \sum_{|m|\leq r}\, ||\partial^m h||_{\mathcal H_{\epsilon}^s}^2\,, \qquad
||h||_{\mathcal H_{\epsilon}^{s,r}L_z^{\infty}} = \sup_{z\in I_z}||h||_{\mathcal H_{\epsilon}^{s,r}}\,.$$
The proof of the theorems in this section is similar to \cite{MB}, except that we need to estimate the (higher-order) derivatives in $z$.
We consider the perturbed form of the solution (\ref{per-f}), with initial
condition
\begin{equation}
h(0,x,v,z)=h_{in}(x,v,z),
\end{equation}
under the incompressible Navier-Stokes scaling (\ref{INS-scaling}). For results on the acoustic scaling, see Remark \ref{RK2}. 

\begin{theorem}
\label{thm1}
If $g$ is the solution to the linear equation
\begin{equation}\label{GL}\partial_t g + \frac{1}{\epsilon}v\cdot\nabla_x g = \frac{1}{\epsilon^2} \mathcal L(g), \end{equation} then

(1) $\forall\, 0<\epsilon\leq\epsilon_d$,  for some $0<\epsilon_d\leq 1$,
the operator $\mathcal G_{\epsilon}$ defined by \begin{equation}\label{GE}\mathcal G_{\epsilon}(g)= \frac{1}{\epsilon^2}\mathcal L(g) - \frac{1}{\epsilon} v\cdot\nabla_x g \end{equation}
generates a $C^{0}$-semigroup on $H_{x,v}^{s}$.

(2) $\exists\, C_{\mathcal G}^{(s)}, \, (b_{j,l}^{(s)}), \, (\alpha_l^{(s)}), \, (a_{j,l}^{(s)})>0$ such that $\forall\, 0<\epsilon\leq \epsilon_d$,
$$ ||\cdot||_{\mathcal H_{\epsilon}^s}^2 \sim\left( ||\cdot||_{L^2_{x,v}}^2 + \sum_{|l|\leq s}||\partial_l^{0}\cdot||_{L^2_{x,v}}^2
+\epsilon^2 \sum_{|l|+|j|\leq s, |j|\geq 1}||\partial_l^{j}\cdot||_{L_{x,v}}^2\right), $$
and $\forall\, g$ in $H_{x,v}^{s}$ and all $z$,
$$\langle \mathcal G_{\epsilon}(g), \, g\rangle_{\mathcal H_{\epsilon}^s} \leq -C_{\mathcal G}^{(s)}\, ||g -
\Pi_{\mathcal G_{\epsilon}}(g) ||_{H_{\Lambda}^s}^2\,.$$

(3) For solution to the nonlinear equation (\ref{INS-scaling}), 
$\forall\, h_{\text{in}}\in H_{x,v}^{s,r}\cap N(\mathcal G_{\epsilon})^{\perp}$,
$h\in \text{Dom}(\Gamma)\, \cap\,  H_{x,v}^{s}$, $\forall\, m\leq r$ and $s\in \mathbb N$,
then
\begin{equation}\label{Th1_3}\frac{d}{dt}||\partial^{m}h||_{\mathcal H_{\epsilon}^{s}}^2 \leq - K_0^{(s)}\, ||\partial^{m}h||_{H_{\Lambda}^s}^2
+ K_1^{(s)}\left(\mathcal G_{x,z}^{s,m}(h, h)\right)^2 + \epsilon^2\, K_2^{(s)}\left( \mathcal G_{x,v,z}^{s,m}(h, h)\right)^2\,. \end{equation}
Moreover, $\exists\, s_0\in\mathbb N$, such that $\forall s\geq s_0$, (\ref{Th1_3}) leads to
\begin{equation}\label{Th1_4}\frac{d}{dt}||h||_{\mathcal H_{\epsilon}^{s,r}}^2 \leq -K_0^{(s)}\, ||h||_{H_{\Lambda}^{s,r}}^2
+ K_1^{(s)}\, ||h||_{H_x^{s,r} L_v^2}^2\, ||h||_{H_{\Lambda}^{s,r}}^2 + \epsilon^2\, K_2^{(s)}\, ||h||_{H_{x,v}^{s,r}}^2\,  ||h||_{H_{\Lambda}^{s,r}}^2 \,.
\end{equation}

(4) $\forall\, 0<\epsilon\leq \epsilon_d$, for some $0<\epsilon_d<1$ and $\forall\, s\geq s_0$, $\exists\, \delta_s, \, C_s, \, \tau_s>0$, such that:
For any distribution $0\leq f_{\text{in}}\in L^1(\mathbb T^d\times\mathbb R^d\times I_z)$ with $f_{\text{in}}=\mathcal M+\epsilon\, M h_{in}$ and
$h_{in}\in N(G_{\epsilon})^{\perp}$, if $||h_{in}||_{\mathcal H_{\epsilon}^{s,r}L_z^{\infty}}\leq\delta_s$,
then there exists a unique global smooth (in $H_{x,v}^{s,r}$, continuous in time) solution $f=f(t,x,v,z)$ satisfying
$f\geq 0$ with $f =\mathcal M + \epsilon\, M h$,  and
\begin{equation}\label{INS} ||h||_{\mathcal H_{\epsilon}^{s,r}L_z^{\infty}} \leq \delta_s\, e^{-\tau_s t}\,.\end{equation}
Furthermore, \begin{equation}\label{INS1} ||h||_{\mathcal H_{\epsilon}^s H_z^r} \leq \delta_s\, e^{-\tau_s t}\,.\end{equation}
In this Theorem, all constants are independent of $z$.
\end{theorem}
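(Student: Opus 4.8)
The plan is to follow the deterministic strategy of \cite{CN, MB}, but to carry out every estimate pointwise in $z$ with $z$-independent constants and then propagate the bounds to the $z$-derivatives $\partial^m$. For part (1) I would invoke the semigroup construction of \cite{CN}: assumptions H1--H3 make $\mathcal L$ dissipative on $N(\mathcal L)^\perp$ while the streaming term $v\cdot\nabla_x$ is conservative, so a Lumer--Phillips / Hille--Yosida argument (exactly as realized in \cite{CN, MB}) shows $\mathcal G_\epsilon$ generates a $C^0$-semigroup on $H^s_{x,v}$ for $\epsilon\le\epsilon_d$. Since none of the operators $\mathcal L$, $v\cdot\nabla_x$ act on $z$, this holds for each fixed $z$ with a threshold $\epsilon_d$ independent of $z$.

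For part (2), the norm equivalence is a Cauchy--Schwarz bound on the mixing terms $\epsilon\, a_{i,l}^{(s)}\langle\partial_{l-\delta_i}^{\delta_i}\cdot,\partial_l^0\cdot\rangle$: choosing the $a_{i,l}^{(s)}$ small relative to the $b_{j,l}^{(s)}$ and $\alpha_l^{(s)}$ makes the cross terms absorbable, which yields the stated equivalence with $\epsilon^2$-weighted $v$-derivatives. For the dissipation inequality I would differentiate $\langle\mathcal G_\epsilon(g),g\rangle_{\mathcal H^s_\epsilon}$ block by block. The pure-$x$ derivatives ($|j|=0$) contribute the local coercivity $-\lambda\|(\cdot)^\perp\|_\Lambda^2$ from (\ref{coercivity}); the $\epsilon^2$-weighted $v$-derivatives ($|j|\ge1$) are controlled by the coercivity of $\Lambda$ in H1$'$ against the commutator and $K$-terms handled through H2$'$ and H2 with the free parameter $\delta$ chosen small; and the essential hypocoercive gain comes from the mixing block, whose time derivative produces the missing $-\|\nabla_x g\|^2$ (the identity displayed in the excerpt), thereby controlling the macroscopic gradient that local coercivity alone cannot see. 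Collecting these with a suitable hierarchy of constants $(b,\alpha,a)$ gives the negative-definite control of $\|g-\Pi_{\mathcal G_\epsilon}(g)\|_{H^s_\Lambda}^2$; this constant-chasing is the technical heart of the linear estimate but is structurally identical to \cite{MB}.

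For part (3) I would apply $\partial^m\partial_l^j$ to (\ref{INS-scaling}), pair with $\partial^m h$ in the $\mathcal H^s_\epsilon$ inner product, and split into linear and nonlinear contributions. The linear part reproduces the dissipation of part (2) for $\partial^m h$, giving $-K_0^{(s)}\|\partial^m h\|_{H^s_\Lambda}^2$. For the nonlinear part I would use H5: the cases $j=0$ and $j\neq0$ produce $\mathcal G_{x,z}^{s,m}(h,h)$ and $\mathcal G_{x,v,z}^{s,m}(h,h)$ respectively, and because the $v$-derivative block of $\mathcal H^s_\epsilon$ carries the weight $\epsilon^2$ the $j\neq0$ terms appear with an $\epsilon^2$ prefactor, which is precisely (\ref{Th1_3}); the Leibniz rule in $z$ applied to $\mathcal F$ is already folded into the definition of these $\mathcal G$-functionals. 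Summing over $|m|\le r$ and invoking the quadratic bounds of H5 converts the $\mathcal G$-functionals into the products $\|h\|_{H_x^{s,r}L_v^2}^2\|h\|_{H_\Lambda^{s,r}}^2$ and $\|h\|_{H_{x,v}^{s,r}}^2\|h\|_{H_\Lambda^{s,r}}^2$ of (\ref{Th1_4}).

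Part (4) is the main obstacle, and I would close it by a continuation (bootstrap) argument performed pointwise in $z$. Since $h_{in}\in N(\mathcal G_\epsilon)^\perp$ and H4 forces $\mathcal F\perp N(\mathcal L)$, the macroscopic part $\Pi_{\mathcal G_\epsilon}h(t)$ is conserved and stays zero, so the dissipation bounds the full norm. Using the equivalence of part (2) and the coercivity of $\Lambda$ to dominate $\|\cdot\|_{H_{x,v}^{s,r}}$ by $\|\cdot\|_{H_\Lambda^{s,r}}$, inequality (\ref{Th1_4}) becomes $\frac{d}{dt}\|h\|_{\mathcal H_\epsilon^{s,r}}^2\le-(K_0^{(s)}-C(K_1^{(s)}+\epsilon^2 K_2^{(s)})\|h\|_{\mathcal H_\epsilon^{s,r}}^2)\|h\|_{H_\Lambda^{s,r}}^2$; as long as $\|h\|_{\mathcal H_\epsilon^{s,r}}^2$ stays below a fixed threshold the bracket exceeds $K_0^{(s)}/2$, and a second use of the equivalence yields a closed inequality $\frac{d}{dt}\|h\|_{\mathcal H_\epsilon^{s,r}}^2\le-2\tau_s\|h\|_{\mathcal H_\epsilon^{s,r}}^2$. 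Combined with local existence in $H_{x,v}^{s,r}$, this a priori bound continues the solution globally while preserving the smallness $\|h_{in}\|_{\mathcal H_\epsilon^{s,r}L_z^\infty}\le\delta_s$, giving (\ref{INS}); taking the supremum over $z$ (legitimate because every constant is $z$-independent) gives the $L_z^\infty$ statement, and integrating against $\pi(z)\,dz$ gives (\ref{INS1}). The delicate points are keeping $\tau_s$ and $\delta_s$ uniform in both $\epsilon$ and $z$: uniformity in $\epsilon$ requires that all $\epsilon$-powers cancel in the equivalence of part (2), and uniformity in $z$ requires the single $z$-independent constant $C_{\mathcal F}$ furnished by H5.
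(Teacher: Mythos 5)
Your proposal follows exactly the route the paper takes: the paper does not re-derive Theorem \ref{thm1} but defers to the deterministic hypocoercivity estimates of \cite{CN, MB}, carried out pointwise in $z$ with $z$-independent constants, with the only genuinely new ingredients being the Leibniz expansion of $\partial^m\mathcal F$ and Assumption {\bf H5} to absorb the resulting cross terms --- precisely the structure you describe. Your reconstruction (semigroup generation via \cite{CN}, norm equivalence by absorbing the mixing term, the $\epsilon^2$-weighted $v$-derivative block producing the $\epsilon^2$ prefactor in (\ref{Th1_3}), and the bootstrap closure for part (4)) is a correct and in fact more detailed account of the same argument.
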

\begin{remark}
\label{RK2}
This theorem gives an exponential decay for the semigroup generated by $\mathcal G_{\epsilon}$ defined in (\ref{GE}).
(3) and (4) of Theorem \ref{thm1} give
$$||h||_{H_{x,v}^{s,r}L_z^{\infty}} \leq \frac{\delta_s}{\epsilon} e^{-\tau_s t}\,,$$
under the incompressible Navier-Stokes scaling. This shows that the $v$-derivatives grow at a rate of $1/\epsilon$.
Combining with the work by Guo \cite{Guo-NS} who studies
the fluid part and the microscopic part of the solution $h^{\perp}$ independently, the author in \cite{MB} constructs a new norm defined by (\ref{norm1}),
builds up a functional that is equivalent to the standard Sobolev norm and obtains an exponential decay in
$H_{x,v}^s$\,.
\end{remark}

With uncertainty in the equation, following a similar framework, we have Proposition \ref{prop} and Theorem \ref{thm2}.
Define $||\cdot||_{\mathcal H_{\epsilon_{\perp}}^s}$ by
\begin{align}
&\displaystyle ||\, \cdot\, ||_{\mathcal H_{\epsilon_{\perp}}^s}^2 = \sum_{|j|+|l|\leq s, \, |j|\geq 1}
 b_{j,l}^{(s)}\, ||\partial_l^{j} (\mathbb I -\Pi_{\mathcal L})\cdot\, ||_{L^2_{x,v}}^2 + \sum_{|l|\leq s}\alpha_l^{(s)}\, ||\partial_l^{0}\cdot\, ||_{L^2_{x,v}}^2 \notag\\[2pt]
&\displaystyle\label{norm1}\qquad\qquad\quad +\sum_{|l|\leq s, \, i, c_i(l)>0}\,  \epsilon\,  a_{i,l}^{(s)}\,\langle\partial_{l-\delta_i}^{\delta_i}\cdot, \, \partial_l^{0}\cdot\, \rangle_{L^2_{x,v}}\,,
\end{align}
and the corresponding Sobolev norms
$$ ||h||_{\mathcal H_{\epsilon_{\perp}}^{s,r}}^2 = \sum_{|m|\leq r}\, ||\partial^m h||_{\mathcal H_{\epsilon_{\perp}}^s}^2\,, \qquad
||h||_{\mathcal H_{\epsilon_{\perp}}^{s,r}L_z^{\infty}}= \sup_{z\in I_z}||h||_{\mathcal H_{\epsilon_{\perp}}^{s,r}}\,. $$
\begin{proposition}
\label{prop}
Let $\mathcal L$ be a linear operator satisfying assumptions ${\bf H1^{\prime}}$, ${\bf H2^{\prime}}$ and
${\bf H3}$ and $\mathcal F$ be a bilinear operator satisfying Assumption ${\bf H5}$.
If $h\in H_{x,v}^{s,r}$ is a solution of (\ref{INS-scaling}), with $h_{in}\in H_{x,v}^{s,r}\cap N(G_{\epsilon})^{\perp}$,
then $\forall\, 0<\epsilon\leq\epsilon_d$, for some $0<\epsilon_d\leq 1$, $\forall\, s\in \mathbb N$ and $m\leq r$,
$\exists\, K_0^{(s)}, \,K_1^{(s)}, \,(b_{j,l}^{(s)}), \,(\alpha_l^{(s)}), \,(a_{i,l}^{(s)})>0$
such that for all $z$, we have
\begin{equation}\label{pr1}\frac{d}{dt}||\partial^{m}h||_{\mathcal H_{\epsilon_{\perp}}^{s}}^2 \leq- \, K_0^{(s)}\left(\frac{1}{\epsilon^2} ||\partial^{m}h^{\perp}||_{H_{\Lambda}^{s}}^2 +
\sum_{1\leq |l|\leq s}||\partial_l^{0}\partial^{m}h||_{L^2_{x,v}}^2 \right)
 + K_1^{(s)}\left(\mathcal G_{x,v,z}^{s,m}(h,h)\right)^2\,. \end{equation}
Furthermore, $\exists\, s_0\in\mathbb N$, $\forall s\geq s_0$, this implies
\begin{equation}
\label{pr2}\frac{d}{dt}||h||_{\mathcal H_{\epsilon_{\perp}}^{s,r}}^2 \leq
- K_0^{(s)}\left(\frac{1}{\epsilon^2} ||h^{\perp}||_{H_{\Lambda}^{s,r}}^2 +
\sum_{|m|\leq r}\sum_{1\leq |l|\leq s}\, ||\partial_l^{0}\partial^m h||_{L^2_{x,v}}^2 \right)
 + K_1^{(s)}\, C_{\mathcal F}\, ||h||_{H_{x,v}^{s,r}}^2\, ||h||_{H_{\Lambda}^{s,r}}^2\,. \end{equation}
 Here all constants are independent of $z$.
 \end{proposition}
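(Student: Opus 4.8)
The plan is to reduce \eqref{pr2} to the single-order estimate \eqref{pr1} and then sum over the order of the $z$-derivative. First I would apply $\partial^m$ to the nonlinear equation \eqref{INS-scaling}. Since $\mathcal L$ and the streaming operator $\mathcal T=v\cdot\nabla_x$ act only in the $(x,v)$ variables and, in the random-initial-data setting considered here, carry no dependence on $z$, they commute with $\partial^m$, giving
\[
\partial_t\partial^m h + \frac{1}{\epsilon}v\cdot\nabla_x\partial^m h = \frac{1}{\epsilon^2}\mathcal L(\partial^m h) + \frac{1}{\epsilon}\partial^m\mathcal F(h,h)\,.
\]
Because $\Pi_{\mathcal L}$ is an $x$- and $z$-independent orthogonal projection in $L^2_v$, it likewise commutes with $\partial^m$ and with $\partial_l^0$, so that $(\mathbb I-\Pi_{\mathcal L})\partial^m h=\partial^m h^{\perp}$ and $\mathcal L(\partial^m h)=\mathcal L(\partial^m h^{\perp})$. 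I would then differentiate $\|\partial^m h\|_{\mathcal H_{\epsilon_{\perp}}^s}^2$ and estimate the three groups of terms in the norm \eqref{norm1} separately, following the construction of \cite{MB}.

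For the purely spatial terms $\alpha_l^{(s)}\|\partial_l^0\partial^m h\|_{L^2_{x,v}}^2$, the streaming contribution vanishes by the periodic boundary condition, while the collision term gives, by the local coercivity ${\bf H3}$, only the nonpositive microscopic contribution $-\tfrac{\lambda}{\epsilon^2}\|(\partial_l^0\partial^m h)^{\perp}\|_{\Lambda}^2$. For the microscopic velocity terms $b_{j,l}^{(s)}\|\partial_l^j\partial^m h^{\perp}\|_{L^2_{x,v}}^2$ with $|j|\geq 1$, I would split $\mathcal L=K-\Lambda$ and invoke ${\bf H1'}$ to extract the coercive gain $\tfrac{1}{\epsilon^2}\nu_5^{\Lambda}\|\partial_l^j\partial^m h^{\perp}\|_{\Lambda}^2$ and ${\bf H2'}$ to absorb the regularizing part of $K$ with a small parameter $\delta$; the micro--macro decomposition of the transport term, together with the commutator $[\Pi_{\mathcal L},\,v\cdot\nabla_x]$, produces $\mathcal O(1/\epsilon)$ contributions mixing fluid and microscopic parts which must be reabsorbed into the $\mathcal O(1/\epsilon^2)$ dissipation. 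The decisive group is the mixing term $\epsilon\,a_{i,l}^{(s)}\langle\partial_{l-\delta_i}^{\delta_i}\partial^m h,\,\partial_l^0\partial^m h\rangle$: exactly as in the model computation displayed before Theorem \ref{thm1}, its time derivative generates the negative hydrodynamic term $-\|\partial_l^0\partial^m h\|_{L^2_{x,v}}^2$ supplying the coercivity on the fluid part that the collision operator alone cannot provide, at the cost of cross terms of the form $\langle\nabla_x\mathcal L(\cdot),\,\nabla_v\cdot\rangle$ bounded through ${\bf H1'}$ and Young's inequality.

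I would then assemble these contributions with the weights $(b_{j,l}^{(s)})$, $(\alpha_l^{(s)})$, $(a_{i,l}^{(s)})$ chosen hierarchically, the powers of $\epsilon$ being already encoded in the norm \eqref{norm1}. The balance is arranged so that the $\mathcal O(1/\epsilon^2)$ microscopic dissipation coming from ${\bf H3}$ and ${\bf H1'}$ dominates the error and commutator terms, and the $\mathcal O(1)$ hydrodynamic dissipation coming from the mixing term dominates the cross terms, once $\epsilon$ is restricted to $0<\epsilon\leq\epsilon_d$ with $\epsilon_d$ sufficiently small; this yields the linear part of the right-hand side of \eqref{pr1}. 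The nonlinear pairings are closed with ${\bf H5}$: each is bounded by $\mathcal G_{x,v,z}^{s,m}(h,h)$ (or $\mathcal G_{x,z}^{s,m}(h,h)$ for the purely spatial derivatives) times a factor measured in the dissipation norm $\|\cdot\|_{\Lambda}$, after which Young's inequality peels off the dissipative factor—absorbed into $K_0^{(s)}$—and leaves precisely $K_1^{(s)}\big(\mathcal G_{x,v,z}^{s,m}(h,h)\big)^2$. Since every constant appearing in ${\bf H1'}$--${\bf H5}$ is $z$-independent by hypothesis, the resulting $K_0^{(s)}$ and $K_1^{(s)}$ are as well.

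Finally, \eqref{pr2} follows by summing \eqref{pr1} over $0\leq m\leq r$ and applying the summed bilinear bound of ${\bf H5}$, namely $\sum_{|m|\leq r}(\mathcal G_{x,v,z}^{s,m}(h,h))^2\leq C_{\mathcal F}\|h\|_{H^{s,r}_{x,v}}^2\|h\|_{H^{s,r}_{\Lambda}}^2$, together with the definitions of the summed norms. The step I expect to be the main obstacle is the simultaneous, $\epsilon$-uniform choice of the weights that absorbs all the error and commutator terms with the correct sign and $\epsilon$-scaling; this is exactly where the separate tracking of the fluid and microscopic parts—the use of $\mathcal H_{\epsilon_{\perp}}^s$ with the projection $\mathbb I-\Pi_{\mathcal L}$ in place of $\mathcal H_{\epsilon}^s$—is essential, since it removes the spurious $1/\epsilon$ growth of the $v$-derivatives noted in Remark \ref{RK2} and keeps the hydrodynamic dissipation at order one.
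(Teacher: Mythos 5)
Your proposal is correct and follows essentially the same route as the paper: apply $\partial^m$ to the equation (equivalently, to the deterministic estimates of Briant \cite{MB}, which is all the paper itself does), track that every constant from ${\bf H1'}$--${\bf H3}$ is $z$-independent, close the nonlinear crossing terms $\mathcal F(\partial^i h,\partial^{m-i}h)$ with Assumption ${\bf H5}$, and then sum over $|m|\leq r$ using the summed form of ${\bf H5}$ to pass from \eqref{pr1} to \eqref{pr2}. The only difference is one of exposition — you reconstruct the hypocoercivity bookkeeping of \cite{MB} (the micro--macro splitting, the mixing term, the hierarchical choice of weights) that the paper simply cites.
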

\begin{remark}
In Proposition \ref{prop}, there is a negative constant order $-1/\epsilon^2$ as the coefficient of the microscopic part $h^{\perp}$ (the first term inside the parenthesis of right-hand-side of (\ref{pr2})), which is the same order as that
derived by Guo in \cite{Guo-NS} for the dissipation rate. 
\begin{theorem}
\label{thm2}
For all $s\geq s_0$, $\exists\, (b_{j,l}^{(s)}), \, (\alpha_l^{(s)}), \, (a_{i,l}^{(s)})>0$ and $0\leq \epsilon_d\leq 1$, such that
for all $0\leq \epsilon\leq \epsilon_d$\,, \\
(1)\, $||\cdot||_{\mathcal H_{\epsilon_{\perp}}^s}\sim ||\cdot||_{H_{x,v}^s} ;$\\
(2)\, Assume  $||h_{in}||_{H_{x,v}^{s,r}L_z^{\infty}} \leq C_{I}$, then if $h$ is a solution of (\ref{INS-scaling}) in
$H_{x,v}^{s,r}$ for all $z$, we have
\begin{equation}\label{thm2_1} ||h||_{H_{x,v}^{s,r} L_z^{\infty}}\leq C_{I}\, e^{-\tau_s t}\,, \end{equation}
where $C_I$, $\tau_s$ are positive constants independent of $\epsilon$. Furthermore,
\begin{equation}\label{thm2_2} ||h||_{H_{x,v}^{s} H_z^r} \leq  C_{I}\, e^{-\tau_s t}\,. \end{equation}
\end{theorem}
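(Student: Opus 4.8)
The plan is to combine the norm equivalence in Part (1) with the differential inequality (\ref{pr2}) of Proposition \ref{prop}, close a nonlinear Gr\"onwall estimate through a continuation argument, and run everything pointwise in $z$ using the $z$-independent constants guaranteed by the Proposition. For Part (1) I would first establish $||\cdot||_{\mathcal H_{\epsilon_\perp}^s}\sim||\cdot||_{H_{x,v}^s}$ uniformly in $0\le\epsilon\le\epsilon_d$. The upper bound follows from Cauchy--Schwarz, the only delicate coefficient being the mixing term $\epsilon\, a_{i,l}^{(s)}\langle\partial_{l-\delta_i}^{\delta_i}\cdot,\,\partial_l^{0}\cdot\rangle$, which Young's inequality bounds by $\tfrac{\epsilon}{2}a_{i,l}^{(s)}(||\partial_{l-\delta_i}^{\delta_i}\cdot||_{L^2_{x,v}}^2+||\partial_l^{0}\cdot||_{L^2_{x,v}}^2)$ and which, for $\epsilon$ small, is absorbed into the positive $b_{j,l}^{(s)}$ and $\alpha_l^{(s)}$ blocks, giving the two-sided estimate. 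The genuinely new point, compared with the norm $\mathcal H_\epsilon^s$ of Theorem \ref{thm1}, is that here the $v$-derivative block carries $O(1)$ weights $b_{j,l}^{(s)}$ rather than $\epsilon^2 b_{j,l}^{(s)}$, so no power of $\epsilon$ is lost. To recover the full Sobolev norm from the microscopic block $||\partial_l^{j}(\mathbb I-\Pi_{\mathcal L})\cdot||$, I would split $h=\Pi_{\mathcal L}h+h^{\perp}$: the kinetic part is controlled directly, while for the fluid part $\Pi_{\mathcal L}h=\sum_i(\int_{\mathbb R^d}h\varphi_i\,dv)\varphi_i$ every $v$-derivative $\partial_v^{j}\varphi_i$ is again a fixed polynomial, so $||\partial_l^{j}\Pi_{\mathcal L}h||_{L^2_{x,v}}\lesssim||\partial_l^{0}h||_{L^2_{x,v}}$, which is controlled by the $\sum_{|l|\le s}\alpha_l^{(s)}||\partial_l^{0}\cdot||_{L^2_{x,v}}^2$ block. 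This yields (1).

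Setting $E(t,z)=||h(t,z)||_{\mathcal H_{\epsilon_\perp}^{s,r}}^2\sim||h||_{H_{x,v}^{s,r}}^2$, the heart of Part (2) is to show that the dissipation on the right of (\ref{pr2}), namely $D:=\epsilon^{-2}||h^{\perp}||_{H_\Lambda^{s,r}}^2+\sum_{|m|\le r}\sum_{1\le|l|\le s}||\partial_l^{0}\partial^{m}h||_{L^2_{x,v}}^2$, controls the full dissipation norm: $D\gtrsim||h||_{H_\Lambda^{s,r}}^2$ uniformly in $\epsilon\le1$. For the kinetic part this is immediate since $\epsilon^{-2}\ge1$; for the fluid part the polynomial structure again reduces the $\Lambda$-derivatives to $L^2$ $x$-derivatives of the coefficients, so the $|l|\ge1$ fluid modes already sit inside $D$. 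The only missing mode is the purely macroscopic $|l|=0$ piece $\Pi_{\mathcal L}\partial^{m}h$, and here conservation enters: since $\mathcal F(h,h)\in N(\mathcal L)^{\perp}$ by {\bf H4}, $\mathcal L$ maps into $N(\mathcal L)^{\perp}$, and $\Pi_{\mathcal G}(v\cdot\nabla_x h)=0$ on the torus, one verifies $\tfrac{d}{dt}\Pi_{\mathcal G}h=0$, whence $\Pi_{\mathcal G}h(t)=\Pi_{\mathcal G}h_{in}=0$ and $\int_{\mathbb T^d}\Pi_{\mathcal L}\partial^{m}h\,dx=0$; the Poincar\'e inequality on $\mathbb T^d$ then bounds this mode by its $x$-gradient, an $|l|=1$ term already in $D$. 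Combining with $||\cdot||_{\Lambda_v}\gtrsim||\cdot||_{L^2_v}$ from (\ref{H1_1}) gives $D\gtrsim||h||_{H_\Lambda^{s,r}}^2\gtrsim||h||_{H_{x,v}^{s,r}}^2\sim E$.

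With this, (\ref{pr2}) closes to
$$\frac{d}{dt}E\le -||h||_{H_\Lambda^{s,r}}^2\Big(c_0-K_1^{(s)}C_{\mathcal F}\,||h||_{H_{x,v}^{s,r}}^2\Big),$$
for a fixed $c_0>0$ independent of $\epsilon$ and $z$. As long as $||h||_{H_{x,v}^{s,r}}^2\le c_0/(2K_1^{(s)}C_{\mathcal F})$ one gets $\tfrac{d}{dt}E\le-\tfrac{c_0}{2}||h||_{H_\Lambda^{s,r}}^2\le-\tau_s E$ for some $\tau_s>0$, hence $E(t,z)\le E(0,z)\,e^{-\tau_s t}$. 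A standard continuation argument makes the smallness self-sustaining: choosing $C_I$ small enough that $E(0,z)$ lies strictly below the threshold for every $z$, the decay keeps $||h||_{H_{x,v}^{s,r}}^2$ below it, so the maximal interval of validity of the estimate is $[0,\infty)$. Taking the supremum over $z$, licensed by the $z$-independence of all constants, gives (\ref{thm2_1}), and integrating the pointwise bound against $\pi(z)\,dz$ with $\int_{I_z}\pi\,dz=1$ gives (\ref{thm2_2}). I expect the main obstacle to be precisely the coercivity recovery $D\gtrsim||h||_{H_\Lambda^{s,r}}^2$: the dissipation furnished by Proposition \ref{prop} is degenerate on the macroscopic mode, and closing this gap needs both the conservation identity $\Pi_{\mathcal G}h\equiv0$ and the torus Poincar\'e inequality, all while verifying that no constant degenerates as $\epsilon\to0$ or depends on $z$; the Gr\"onwall and continuation steps downstream are then routine, and the $z$-uniformity is inherited for free.
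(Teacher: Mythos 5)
Your proposal is correct and follows essentially the same route as the paper: the differential inequality of Proposition \ref{prop}, an upgrade of its dissipation to control the full $H_\Lambda^{s,r}$ norm, a smallness/monotonicity argument to keep the nonlinear bracket negative, Gr\"onwall, and finally the supremum in $z$ followed by integration against $\pi(z)\,dz$. The one step you work out explicitly --- recovering the purely macroscopic $|l|=0$ mode via the conservation identity $\Pi_{\mathcal G}h\equiv 0$ and the torus Poincar\'e inequality --- is precisely the content of the estimates from \cite{MB} that the paper invokes by citation in its Lemma (\ref{pr3}), so this is a difference of detail rather than of approach.
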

\end{remark}
\begin{remark}
(i) For the acoustic scaling (\ref{Euler-scaling}), one can get similar
results as in Theorem \ref{thm1} and Proposition \ref{prop}. 
One only needs to multiply by $\epsilon$ to the right-hand-side of the estimates (\ref{Th1_3}), (\ref{Th1_4}),
(\ref{pr1}) and (\ref{pr2}). The corresponding results for Theorem \ref{thm1} become 
$$||h||_{\mathcal H_{\epsilon}^{s,r}L_z^{\infty}} \leq \delta_s\, e^{-\epsilon\tau_s t}\,, \qquad
 ||h||_{\mathcal H_{\epsilon}^s H_z^r} \leq \delta_s\, e^{-\epsilon\tau_s t}\,.$$
(2) of Theorem \ref{thm2} accordingly changes to
$$||h||_{H_{x,v}^{s,r} L_z^{\infty}}\leq C_I\, e^{-\epsilon\tau_s t}\,, \qquad
||h||_{H_{x,v}^{s} H_z^r} \leq C_I\,  e^{-\epsilon\tau_s t}\,.$$
(ii) Theorem \ref{thm2} shows that the uncertainties from the initial datum will eventually diminish and the solution will
exponentially decay in time to the deterministic global equilibrium, 
with a decay rate of $\mathcal O(e^{-t})$ under the incompressible Navier-Stokes scaling and
$\mathcal O(e^{-\epsilon t})$ under the acoustic scaling.
\end{remark}
\section{Proof of Proposition \ref{prop} and Theorem \ref{thm2}}
\label{Proof_Thms}
The proof follows the framework in \cite{MB} for deterministic equations under the incompressible Navier-Stokes scaling, since our analysis in the random space
depends on $z$ pointwisely. The main difference lies in the following:
1) one needs to check that all constants are independent of $z$, which is the case here by going through the proofs in
\cite{MB}; 2) taking $\partial^m$ of $\mathcal F$ will have crossing terms like $\mathcal F(\partial^i h, \partial^{m-i} h)$ ($0\leq i\leq m$), thus one needs to
verify Assumption {\bf H5}, which is done in section \ref{Proof_H4} for the Boltzmann equation with uncertainties.
\\[2pt]

\noindent {\it\underline{Proof of Proposition \ref{prop}:}}\,
For all $z$, one can observe (\ref{pr1}) by taking $\partial^m$ on both sides of all the estimates derived in \cite{MB}
for deterministic problems.
Summing up $m=0, \cdots, r$ in (\ref{pr1}) and using Assumption ${\bf H5}$, we get (\ref{pr2}).
\\[2pt]

\noindent {\it\underline{Proof of Theorem \ref{thm2} (2):}}\,
The proof of (1) for each $z$ is the same as in \cite{MB}.
To prove (2), for all $z$, one can easily observe the following Lemma by taking $\partial^m$ on both sides of equation (\ref{INS-scaling}) and then follow
all the estimates derived in \cite{MB}
for deterministic problems.
\begin{lemma}
\begin{align}
&\displaystyle \frac{d}{dt}||\partial^{m} h||_{\mathcal H_{\epsilon_{\perp}}^s}^2 \leq
- K_0^{(s)}\left(\sum_{|j|+|l|\leq s, |j|\geq 1}\, ||\partial_l^j \partial^m h^{\perp}||_{\Lambda}^2 +
\sum_{0\leq |l|\leq s}\, ||\partial_l^0 \partial^m h||_{\Lambda}^2\right) + K_1^{(s)}\left(\mathcal G_{x,v,z}^{s,m}(h,h)\right)^2 \notag\\[2pt]
&\displaystyle \label{pr3}\qquad\qquad\qquad\leq - K_0^{(s^{\ast})}\, ||\partial^m h||_{H_{\Lambda}^s}^2 + K_1^{(s)}\left(\mathcal G_{x,v,z}^{s,m}(h,h)\right)^2\,.
\end{align}
\end{lemma}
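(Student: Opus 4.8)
The plan is to establish the differential inequality by differentiating the modified functional $\|\partial^{m} h\|_{\mathcal H_{\epsilon_{\perp}}^s}^2$ in time along solutions of (\ref{INS-scaling}), following the hypocoercivity construction of \cite{MB} but carrying the $z$-derivatives through. First I would apply $\partial^m$ to equation (\ref{INS-scaling}). In the random-initial-data regime the operator $\mathcal L$ and the streaming operator $v\cdot\nabla_x$ are independent of $z$, so $\partial^m$ commutes with both and $\partial^m h$ solves the same linear equation with forcing $\frac{1}{\epsilon}\partial^m\mathcal F(h,h)$. Since $\mathcal F$ is bilinear, Leibniz's rule gives $\partial^m\mathcal F(h,h)=\sum_{i\leq m}\binom{m}{i}\mathcal F(\partial^i h,\partial^{m-i}h)$, a finite sum of bilinear terms whose $L^2_{x,v}$ pairings against $\Lambda$-valued test functions are precisely those controlled by Assumption \textbf{H5} through $\mathcal G_{x,v,z}^{s,m}(h,h)$.

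Next I would differentiate the three groups of terms comprising $\|\cdot\|_{\mathcal H_{\epsilon_{\perp}}^s}^2$ in (\ref{norm1}). For the microscopic terms $b_{j,l}^{(s)}\|\partial_l^{j}(\mathbb I-\Pi_{\mathcal L})\partial^m h\|_{L^2_{x,v}}^2$ with $|j|\geq 1$, splitting $\mathcal L=K-\Lambda$, the $\frac{1}{\epsilon^2}\mathcal L$-contribution produces dissipation via \textbf{H1}$'$ (coercivity of $\Lambda$ on derivatives, modulo lower order) and \textbf{H2}$'$ (the regularizing bound on $K$), giving a term of size $-\frac{c}{\epsilon^2}\|\partial_l^{j}\partial^m h^{\perp}\|_{\Lambda}^2$ up to lower-order remainders. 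For the fluid terms $\alpha_l^{(s)}\|\partial_l^{0}\partial^m h\|_{L^2_{x,v}}^2$ the streaming operator is skew-symmetric under the periodic boundary condition and $\mathcal L$ annihilates the kernel, so the macroscopic component carries \emph{no} intrinsic dissipation; the missing coercivity in the pure $x$-directions must come from differentiating the mixing terms $\epsilon\,a_{i,l}^{(s)}\langle\partial_{l-\delta_i}^{\delta_i}\partial^m h,\partial_l^{0}\partial^m h\rangle$, whose time derivative yields the crucial $-\|\partial_l^{0}\partial^m h\|^2$ contribution together with cross terms involving $\mathcal L$, $K$ and the streaming flux. Assumption \textbf{H3} (local coercivity) and \textbf{H4} (orthogonality of $\mathcal F$ to $N(\mathcal L)$) ensure the fluid and microscopic channels decouple at the level of the leading dissipation.

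The decisive step is then to choose the weights $(b_{j,l}^{(s)})$, $(\alpha_l^{(s)})$, $(a_{i,l}^{(s)})$ hierarchically in the order $s$, with the prescribed powers of $\epsilon$, so that every cross term — in particular the streaming contributions that enter at size $1/\epsilon$ — is absorbed by the two favorable terms $\frac{1}{\epsilon^2}\|\partial_l^{j}\partial^m h^{\perp}\|_{\Lambda}^2$ and $\|\partial_l^{0}\partial^m h\|_{\Lambda}^2$. This bookkeeping is exactly the deterministic computation of \cite{MB}, and since none of the constants $\nu_s^{\Lambda}$, $C(\delta)$, $\lambda$, $C^{\mathcal L}$ in \textbf{H1}--\textbf{H3} depend on $z$, the resulting $K_0^{(s)}$, $K_1^{(s)}$ are automatically uniform in $z$. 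Summing the weighted contributions over $|j|+|l|\leq s$ yields the first line of (\ref{pr3}), with the nonlinear remainder collected into $K_1^{(s)}(\mathcal G_{x,v,z}^{s,m}(h,h))^2$.

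Finally, for the second inequality I would convert the split dissipation into the full norm $\|\partial^m h\|_{H_{\Lambda}^s}^2$. Writing $\partial_l^{j}\partial^m h=\partial_l^{j}\partial^m h^{\perp}+\partial_l^{j}\Pi_{\mathcal L}(\partial^m h)$ for $|j|\geq 1$, and using that $\Pi_{\mathcal L}$ projects onto the finite-dimensional span of the fixed polynomials $\varphi_i$ and commutes with $x$-derivatives, the $v$-derivatives only hit the $\varphi_i$; hence $\|\partial_l^{j}\Pi_{\mathcal L}(\partial^m h)\|_{\Lambda}\lesssim\|\Pi_{\mathcal L}(\partial_x^{l}\partial^m h)\|_{\Lambda}\lesssim\|\partial_l^{0}\partial^m h\|_{\Lambda}$ by norm-equivalence on the kernel. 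Thus each $\|\partial_l^{j}\partial^m h\|_{\Lambda}^2$ with $|j|\geq 1$ is bounded by the microscopic sum plus the fluid sum already present on the right, so the two sums dominate $\sum_{|j|+|l|\leq s}\|\partial_l^{j}\partial^m h\|_{\Lambda}^2=\|\partial^m h\|_{H_{\Lambda}^s}^2$, giving the second line with a possibly smaller constant $K_0^{(s^{\ast})}$. The main obstacle is the hierarchical weight selection together with the $1/\epsilon$ streaming cancellations in the middle step; the genuinely new ingredients relative to \cite{MB} are merely the Leibniz expansion of $\partial^m\mathcal F$ controlled by \textbf{H5} and the observation that every constant inherits $z$-independence from the $z$-independent data of the operators.
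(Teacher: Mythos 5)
Your proposal is correct and follows essentially the same route as the paper: the paper's own proof of this lemma consists precisely of applying $\partial^m$ to (\ref{INS-scaling}) and then running the deterministic hypocoercivity estimates of \cite{MB} pointwise in $z$, with the Leibniz expansion of $\partial^m\mathcal F$ absorbed via Assumption \textbf{H5} and all constants inherited $z$-independent. Your additional detail on the weight hierarchy and on passing from the split dissipation to $\|\partial^m h\|_{H_\Lambda^s}^2$ via the finite-dimensionality of $N(\mathcal L)$ is a faithful expansion of what the paper leaves implicit by citation.
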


Then we sum up $m=0, \cdots, r$ of (\ref{pr3}) and apply Assumption ${\bf H5}$, $\exists\, s_0\in\mathbb N$, $\forall\, s\geq s_0$, such that
\begin{equation}\label{h_perp}\frac{d}{dt}||h||_{\mathcal H_{\epsilon_{\perp}}^{s,r}}^2 \leq \left( K_1^{(s)} C_{\mathcal F}\, ||h||_{H_{x,v}^{s,r}}^2 - K_0^{(s^{\ast})}\right) ||h||_{H_{\Lambda}^{s,r}}^2\,.
\end{equation}
Since $||h||_{\mathcal H_{\epsilon_{\perp}}^{s,r}}$ and $||h||_{H_{x,v}^{s,r}}$ are equivalent, so
$||h||_{H_{x,v}^{s,r}}^2 \leq C\, ||h||_{\mathcal H_{\epsilon_{\perp}}^{s,r}}^2$ with $C$ independent of $\epsilon$, then
$$\frac{d}{dt} ||h||_{\mathcal H_{\epsilon_{\perp}}^{s,r}}^2\leq \left( K_1^{(s)}C_{\mathcal F}\, C\,  ||h||_{\mathcal H_{\epsilon_{\perp}}^{s,r}}^2 - K_0^{(s^{\ast})}\right) ||h||_{H_{\Lambda}^{s,r}}^2\,. $$
Therefore if the initial data satisfy
$$||h_{in}||_{\mathcal H_{\epsilon_{\perp}}^{s,r}}^2 \leq \frac{K_0^{(s^{\ast})}}{2K_1^{(s)}C_{\mathcal F}\, C}\,, $$
one implies that $||h||_{\mathcal H_{\epsilon_{\perp}}^{s,r}}^2$ is always decreasing, so for all $t>0$,
$$ \frac{d}{dt}||h||_{\mathcal H_{\epsilon_{\perp}}^{s,r}}^2 \leq - \frac{K_0^{(s^{\ast})}}{2K_1^{(s)}C_{\mathcal F}\, C}\, ||h||_{H_{\Lambda}^{s,r}}^2\leq
- C^{\ast} ||h||_{\mathcal H_{\epsilon_{\perp}}^{s,r}}^2\,,$$
where $C^{\ast}$ is a constant independent of $z$. The last inequality is because $H_{\Lambda}^s$ controls the $H_{x,v}^s$--norm that is equivalent to the
$\mathcal H_{\epsilon_{\perp}}^s$--norm. Applying Gronwall's inequality gives
the exponential decay of $||h||_{\mathcal H_{\epsilon_{\perp}}^{s,r}}\sim ||h||_{H_{x,v}^{s,r}}$\,, thereafter the exponential decay of
$||h||_{H_{x,v}^{s,r}L_z^{\infty}}$, so (\ref{thm2_1}) is proved.
Furthermore, one has \begin{equation}\label{Hz}||h||_{H_{x,v}^{s}H_z^r}^2 = \int_{I_z}\, ||h||_{H_{x,v}^{s,r}}^2\, \pi(z)dz \leq  ||h||_{H_{x,v}^{s,r}L_z^{\infty}}^2\, \int_{I_z} \pi(z)dz \leq
C_{I}^2\, e^{-2\tau_s t}\,, \end{equation}
thus (\ref{thm2_2}) is obtained.

\section{The Boltzmann Equation with Random Inputs}
\subsection{The Basic Setup}
\label{Section2}
Let us consider the Boltzmann equation with uncertain initial data and both scalings.
For discussion of the case with random collision kernels, refer to subsection \ref{kernel}. 
The problem reads
\begin{align}
\label{BP}
\left\{
\begin{array}{l}
\displaystyle \partial_t f + \frac{1}{\epsilon^\alpha} v\cdot\nabla_x f =\frac{1}{\epsilon^{1+\alpha}}\mathcal Q(f,f), \\[4pt]
\displaystyle  f(0,x,v,z)=f^0(x,v,z), \qquad x\in\Omega\subset\mathbb T^d,  \, v\in \mathbb R^d,  \, z\in I_z,
\end{array}\right.
\end{align}
The collision operator (local in $t$, $x$) is
$$\mathcal Q(f,f) = \int_{\mathbb R^d\times{\mathbb S^{d-1}}}\, B(|v-v_{\ast}|,  \cos\theta)\, (f^{\prime}f_{\ast}^{\prime}- f f_{\ast})\,
dv_{\ast}\, d{\sigma}. $$
We adopt notations $f^{\prime}=f(v^{\prime})$, $f_{\ast}=f(v_{\ast})$ and $f_{\ast}^{\prime}=f(v_{\ast}^{\prime})$, where
$$v^{\prime}= (v+v_{\ast})/2 + (|v-v_{\ast}|/2) \sigma, \qquad v_{\ast}^{\prime} = (v+v_{\ast})/2 - (|v-v_{\ast}|/2)\sigma$$
are the post-collisional velocities of particles with pre-collisional velocities $v$ and $v_{\ast}$.
$\theta\in[0,\pi]$ is the deviation angle between $v^{\prime}-v_{\ast}^{\prime}$ and $v-v_{\ast}$.

Boltzmann's collision operator conserves mass, momentum and energy. The solution formally satisfies the celebrated Boltzmann's H theorem,
\begin{equation}\label{H}-\frac{d}{dt}\int_{\mathbb R^d}\, f \log f \, dv= -\int_{\mathbb R^d}\, \mathcal Q(f,f) \log(f)\, dv\geq 0. \end{equation}
The global equilibrium distribution is given by the Maxwellian distribution
\begin{equation}\label{Max}\mathcal M(\rho_{\infty}, u_{\infty}, T_{\infty}) = \frac{\rho_{\infty}}{(2\pi T_{\infty})^{N/2}}\, \exp\left(-\frac{|u_{\infty}-v|^2}{2T_{\infty}}\right), \end{equation}
where $\rho_{\infty}$, $u_{\infty}$, $T_{\infty}$ are the density, mean velocity and temperature of the gas
\begin{align*}
&\displaystyle \rho_{\infty} = \int_{\Omega\times{\mathbb R^d}}\, f(v)\, dxdv, \qquad
u_{\infty} = \frac{1}{\rho_{\infty}}\, \int_{\Omega\times{\mathbb R^d}}\, vf(v)\, dxdv, \\[2pt]
&\displaystyle T_{\infty} = \frac{1}{N\rho_{\infty}}\int_{\Omega\times{\mathbb R^d}}\, |u_{\infty}-v|^2\, f(v)\, dxdv,
\end{align*}
which are all determined by the initial datum due to the conservation properties.
We will consider hard potentials with $B$ satisfying Grad's angular cutoff, that is,
\\[1pt]

{\bf Assumptions on the collision kernel:}
\begin{align}
&\displaystyle B(|v-v_{\ast}|, \cos\theta) = \phi(|v-v_{\ast}|)\, b(\cos\theta),  \qquad
\phi(\xi) = C_{\phi}\, \xi^{\gamma}, \, \text{  with      }\gamma\in[0,1], \notag\\[2pt]
&\label{BK}\displaystyle \forall\eta\in[-1,1], \qquad |b(\eta)|\leq C_b, \quad |b^{\prime}(\eta)|\leq C_b\,,
\end{align}
where $b$ is non-negative and not identically equal to $0$. Introduce the collision frequency
$$\nu(v) = \int_{\mathbb R^d\times S^{d-1}}\phi(|v-v_{\ast}|) b(\cos\theta) \mathcal M(v_{\ast})\, dv_{\ast}d\sigma
=(\phi \ast\mathcal M)(v). $$
Recall that $h$ solves (\ref{INS-scaling}),
with the linearized collision operator given by
\begin{align}
&\displaystyle\mathcal L(h)= M^{-1}\left[\mathcal Q(Mh,\mathcal M) + \mathcal Q(\mathcal M, Mh)\right] = K(h)-\Lambda(h), \notag\\[4pt]
&\label{L0}\displaystyle \qquad = M \int_{\mathbb R^d\times S^{d-1}}\phi(|v-v_{\ast}|)b(\cos\theta)\mathcal M(v_{\ast}) 
\left[\frac{h_{\ast}^{\prime}}{M_{\ast}^{\prime}}+\frac{h^{\prime}}{M^{\prime}}-\frac{h_{\ast}}{M_{\ast}}-\frac{h}{M}\right] dv_{\ast}d\sigma\,. 
\end{align}
where
\begin{align*}
&\displaystyle\Lambda(h)=\nu(v)h, \qquad K(h)=\mathcal L^{+}(h)-\mathcal L^{\ast}(h), \qquad
\mathcal L^{\ast}(h)=M [(hM)\ast\phi], \\[2pt]
&\displaystyle \mathcal L^{+}(h)=\int_{\mathbb R^d\times\mathbb S^{d-1}}\phi(|v-v_{\ast}|) b(\cos\theta)\, [h^{\prime}M_{\ast}^{\prime}+
h_{\ast}^{\prime}M^{\prime}] M_{\ast}\, dv_{\ast}d\sigma\,.
\end{align*}
The bilinear part is given by
\begin{align}
&\displaystyle\mathcal F(h,h)=M^{-1}\left[\mathcal Q(Mh,Mh)+\mathcal Q(Mh,Mh)\right] \notag\\[2pt]
&\displaystyle\label{GB}\qquad\quad =\int_{\mathbb R^d\times\mathbb S^{d-1}}\phi(|v-v_{\ast}|)b(\cos\theta)
M_{\ast}(h_{\ast}^{\prime}h^{\prime}-h_{\ast}h)\, dv_{\ast}d\sigma\,.
\end{align}
The spectrum of $\mathcal L$ in $L_v^2$ is included in $\mathbb R_{-}$. Moreover the null space of $\mathcal L$ is
\begin{equation}\label{NB} N(\mathcal L)=\text{Span}\{M, \,v_1 M, \,\cdots, v_d M, \, |v|^2 M\}:=\text{Span}\{\varphi_1, \cdots, \varphi_n\}, \qquad
n=d+2\,,\end{equation}
thus \begin{equation}\label{LN} \int_{\mathbb R^d}\, \mathcal L(h) \varphi_i\, dv=0\,, \qquad i=1, \cdots, n\,.  \end{equation}
Define the coercivity norm $$||h||_{\Lambda}=||h(1+|v|)^{\gamma/2}||_{L^2}\,.$$
The coercivity argument of $\mathcal L$ is proved in \cite{MC}:
\begin{equation}
\label{CoerB} \langle h, \, \mathcal L(h)\rangle_{L^2_v}\leq -\lambda\, ||h^{\perp}||_{\Lambda_v^2}\,.
\end{equation}
Explicit spectral gap estimates for the linearized Boltzmann and Landau operators with hard potentials
have been obtained in \cite{BCM} and extended to estimates given in \cite{MC}.
Thus $\mathcal L_{B}$ satisfies Assumption $\bf{H3}$ with an explicit bound.
Proofs of Assumptions ${\bf H1^{\prime}}$, ${\bf H2^{\prime}}$, ${\bf H5}$ are given
in \cite{CN} and \cite{MB}. We will show in the following subsection that with the restriction on the collision kernel (\ref{BK}),
Assumption ${\bf H5}$ is satisfied.

\subsection{Proof of Assumption H5}
\label{Proof_H4}
The bilinear part $\mathcal F$ is given in (\ref{GB}). Let $\mathcal F(h,h)=\mathcal F^{+}(h,h) + \mathcal F^{-}(h,h)$, with
\begin{align*}
&\displaystyle \mathcal F^{+}(h,h)= \int_{\mathbb R^{d}\times S^{d-1}}\, \phi(|v-v_{\ast}|)\, b(\cos\theta)
M_{\ast}\, h_{\ast}^{\prime}\, h^{\prime}\, dv_{\ast}d\sigma\,,  \\[2pt]
&\displaystyle \mathcal F^{-}(h,h) = -\int_{\mathbb R^{d}\times S^{d-1}}\,  \phi(|v-v_{\ast}|)\, b(\cos\theta)
M_{\ast}\, h_{\ast}\, h\, dv_{\ast}d\sigma\,.
\end{align*}
Denote $\sum_{|j_0|+|j_1|+|j_2|=j,\, |l_1|+|l_2|=l} =\sum_{j,\, l}$\,. 
Differentiating operator $\mathcal F^{-}$, one obtains (see \cite{MB})
$$\partial_l^{j}\mathcal F^{-}(h,h)=-\frac{1}{2}\sum_{j,\, l}\, \int_{\mathbb R^{d}\times S^{d-1}}\,
b(\cos\theta)\, |u|^{\gamma}\, \partial_{0}^{j_0}\left(\mathcal M(v-u)^{1/2}\right)(\partial_{l_1}^{j_1}h_{\ast})(\partial_{l_2}^{j_2}h)\, dud\sigma\,.$$
Denote $\sum_{i=0}^{m}=\sum_{i}$ and $\sum_{m=0}^{r}=\sum_{m}$.
For $|m|\leq r$, take $\partial^m$,
$$ \partial^m \partial_l^{j}\mathcal F^{-}(h,h) = -\frac{1}{2}\sum_{j,\, l}\sum_{i}\binom{m}{i}\,
\int_{\mathbb R^{d}\times S^{d-1}}\, b(\cos\theta)\, |u|^{\gamma}\, \partial_{0}^{j_0}\left(\mathcal M(v-u)^{1/2}\right)
(\partial^{i}\partial_{l_2}^{j_2}h)(\partial^{m-i}\partial_{l_1}^{j_1}h_{\ast})\, dud\sigma\,. $$
Following \cite{MB} and the cutoff assumption $|b|\leq C_b$, by the Cauchy-Schwarz inequality, one has
\begin{align}
&\displaystyle |\langle\partial^{m}\partial_i^{j}\mathcal F^{-}, \, f\rangle| \leq C \sum_{j,\, l} \sum_{i}\binom{m}{i}
\int_{\Omega\times\mathbb R^{d}} (1+|v|)^{\gamma}\, |\partial^{i}\partial_{l_2}^{j_2}h|\, |f| \left(\int_{\mathbb R^{d}}\mathcal
M_{\ast}^{1/8}\, |\partial^{m-i}\partial_{l_1}^{j_1}h_{\ast}|\, dv_{\ast}\right) dvdx \notag\\[4pt]
&\label{G0}\displaystyle \qquad\qquad\qquad\leq C\, \mathcal G^{s,m}_{x,v,z}(h,h)\, ||f||_{\Lambda}\,,
\end{align}
where
\begin{align}
&\displaystyle\mathcal G^{s,m}_{x,v,z}(h,h) = \sum_{i}\binom{m}{i} \sum_{|j_1|+|l_1|+|j_2|+|l_2|\leq s}\left(\int_{\mathbb T^{d}}\, ||\partial^{i}\partial_{l_2}^{j_2}h||_{\Lambda_v}^2\, ||\partial^{m-i}\partial_{l_1}^{j_1}h||_{L^2_v}^2\, dx\right)^{1/2} \notag\\[4pt]
&\displaystyle\qquad\qquad\quad\leq \sum_{i}\binom{m}{i}\,
C_s\, ||\partial^{m-i}h||_{H_{x,v}^s}\, ||\partial^i h||_{H_{\Lambda}^s}
\notag \\[4pt]
&\label{G1}\displaystyle\qquad\qquad\quad\leq C_{s,r}\, ||h||_{H^{s,m}}\, ||h||_{H_{\Lambda}^{s,m}}
\leq C_{s,r}\, ||h||_{H^{s,r}}\, ||h||_{H_{\Lambda}^{s,r}}\,,
\end{align}
where the constant $C_{s,r}$ depends on $s$ and $r$, and H$\ddot{o}$lder's inequality was used in the second inequality.
To get the first inequality, note that the Sobolev embedding stating that $\exists\, s_0\in \mathbb N$,
such that if $s\geq s_0$, we have $H_{x}^{s/2} \hookrightarrow L_{x}^{\infty}$. We divide the sum into two cases $|j_1|+|l_1|\leq s/2$ and $|j_2|+|l_2|\leq s/2$\,.
If $|j_1|+|l_1|\leq s/2$, then for each $z$, one has
\begin{align*}
&\displaystyle\qquad ||\partial_{l_1}^{j_1}\partial^{m-i}h||_{L^2_{v}}^2 \leq \sup_{x\in\mathbb T^{d}}\,  ||\partial_{l_1}^{j_1}\partial^{m-i}h||_{L^2_{v}}^2
\leq \widetilde C_{s} \left|\left|\, ||\partial_{l_1}^{j_1}\partial^{m-i}h||_{L^2_v}^2\, \right|\right|_{H_{x}^{s/2}} \\[2pt]
&\displaystyle\quad = C_s \sum_{|p|\leq s/2}\sum_{p_1+p_2=p}\, \int_{\mathbb T^{d}\times\mathbb R^{d}} \left| (\partial_{l_1+p_1}^{j_1}\partial^{m-i}h)\, (\partial_{l_1+p_2}^{j_1}
\partial^{m-i}h)\right| dvdx \leq C_s\, ||\partial^{m-i}h||_{H_{x,v}^{s}}^2\,,
\end{align*}
after using the Cauchy-Schwarz inequality in the last step. 
In the other case, if $|j_2|+|l_2|\leq s/2$ and by the same calculations,
$ ||\partial_{l_2}^{j_2}\partial^i h||_{\Lambda_v}^2 \leq C_s\, ||\partial^i h||_{H_{\Lambda}^{s}}^2$\,. Taking the square and summing up
$|m|=0, \cdots, r$ on both sides of (\ref{G1}), we obtain Assumption ${\bf H5}$. \\
The second term $\mathcal F_{B}^{+}$ is dealt with in the same way.

\begin{remark}
Assumptions ${\bf H1-H5}$, ${\bf H1^{\prime}-H2^{\prime}}$ introduced in section \ref{sec1} for the hypocoercivity theory also
hold for several different kinetic models, in addition to the Boltzmann equation.
\cite{CN} validates the assumptions for linear relaxation, linear Fokker-Planck,
nonlinear semi-classical quantum relaxation kinetic and the Landau equation with hard and moderately soft potential.
The results established in section \ref{Thms} for Boltzmann equation can be done for these other models in a similar fashion.
We omit the details. 
\end{remark}

\subsection{Uncertainties from random collision kernels}
\label{kernel}
The above estimate can also be applied to the case of random collision kernels, in which $b$ depends on $z$,
under the restriction that all the $z$-derivatives of $b$ are assumed to be bounded, i.e.,
\begin{align}
&\displaystyle B(|v-v_{\ast}|, \cos\theta, z) = \phi(|v-v_{\ast}|)\, b(\cos\theta,z),  \qquad
\phi(\xi) = C_{\phi}\, \xi^{\gamma}, \,\text{with      }\gamma\in[0,1], \notag\\[4pt]
&\label{BK1}\displaystyle \forall\eta\in[-1,1], \qquad |b(\eta,z)|\leq C_b, \, |\partial_{\eta} b(\eta,z)|\leq C_b, \,  \text{  and }\,
|\partial_z^k b(\eta, z)|\leq C_b^{\ast},  \,  \forall\, 0\leq k\leq r\,.
\end{align}

Under the above assumptions for $B$, 
all the Assumptions H1-H5, H1'-H2' introduced in section \ref{sec1} still hold when uncertainties are from collision kernels, 
which is obvious to see. The main goal of this section is to show that 
Theorem \ref{thm2} is still valid when collision kernels are random. 
The difference in the proof compared to the random initial data case will be discussed. 
The impact of $z$ dependence of $b$ on the linear (or linearized) collision operator $\mathcal L$ needs to be
estimated differently, which is shown in Lemma \ref{LL1}. 
With Lemma \ref{LL1}, we then show how to embed it and get the same result as Theorem \ref{thm2}
--with rest of the proof following the case of random initial data. 
This section will be concluded with Theorem \ref{random_kernel}. 

Below we use the linearized Boltzmann operator with random collision kernel (\ref{BK1}) as an example, although the same analysis can also be done 
for the linearized Landau, semi-classical relaxation model of quantum Boltzmann equation and the linear Fokker-Planck equation. 
Our analysis follows those of \cite{Ma, Liu} for linear transport equations, with some new estimates by using the $\Lambda$-norm.

Suppose $g$ is a solution to 
\begin{equation}\label{g1}\partial_t g +\frac{1}{\epsilon}v\cdot\nabla_x g= \frac{1}{\epsilon^2}\mathcal L(g), \end{equation}
where $\mathcal L$ is given in (\ref{L0}), with $b$ given by (\ref{BK1}). \\
Recall (\ref{Pi}) that $\displaystyle\Pi_{\mathcal L} (g) = \sum_{i=1}^{n} \left(\int_{\mathbb R^d}\, g\varphi_i\, dv \right)\varphi_i$. Then by (\ref{LN}), one knows
\begin{align*}
&\displaystyle\int_{\Omega\times\mathbb R^d}\, \mathcal L(h)\, \Pi_{\mathcal L}(g)\, dvdx
=\int_{\Omega\times\mathbb R^d}\,  \mathcal L(h)\, \sum_{i=1}^{n}\left(\int_{\mathbb R^d} g\varphi_i\, dv\right)\varphi_i\, dvdx \\[2pt]
&\displaystyle \qquad\qquad\qquad\qquad\qquad=\int_{\Omega}\, \sum_{i=1}^{n}\, \underbrace{\int_{\mathbb R^d}\, \mathcal L(h)\varphi_i\, dv}_{=0}
\left(\int_{\mathbb R^d}\, g\varphi_i\,dv\right) dx=0\,.
\end{align*}
Since $\mathcal L(h)=\mathcal L(h-\Pi_{\mathcal L}(h))=\mathcal L(h^{\perp})$ because
$\Pi_{\mathcal L}(h) \in N(\mathcal L)$, thus 
\begin{equation}\label{LLL}\langle \mathcal L(h), \, g\rangle_{L^2_{x,v}}=\langle \mathcal L(h), \, g-\Pi_{\mathcal L}(g)\rangle_{L^2_{x,v}}=
\langle \mathcal L(h^{\perp}), \, g^{\perp}\rangle_{L^2_{x,v}}\leq C^{\mathcal L} ||h^{\perp}||_{\Lambda}\, ||g^{\perp}||_{\Lambda}\,, 
\end{equation}
where we used (\ref{LL}) in Assumption ${\bf H1}$. 

The below arguments follow the idea in \cite{Ma, Liu}, nevertheless, with the help of (\ref{LLL}), 
a more general framework (especially for the linear Fokker-Planck and 
the linearized Boltzmann, Landau, etc) is constructed here. 
Denote $$||g||_{L_{x,v}^{2,r}}^2 = \sum_{|m|\leq r}\, ||\partial^m g||_{L^2_{x,v}}^2, \qquad
||g||_{\Lambda^{r}}^2 = \sum_{|m|\leq r}\, ||\partial^{m}g||_{\Lambda}^2\,. $$
Our goal is to show that 
\begin{equation}\label{Cm}\epsilon^2 \partial_t \bigg(\sum_{m=0}^{r} \widetilde C_{m,r+1}\,  ||\partial^m g||_{L^2_{x,v}}^2\bigg) \leq -\lambda\, ||g^{\perp}||
_{\Lambda^{r}}^2\,,\end{equation}
where $\lambda>0$ is a $z$-independent constant that depends on $C_b$ in (\ref{BK1}), $\widetilde C_{m,r+1}>0$ are constants. 

Taking $\partial^l$ on (\ref{g1}), one gets
\begin{equation}\label{g2}\epsilon^2 \partial_t \partial^{l}g + \epsilon v\cdot\nabla_x (\partial^{l}g) = \partial^{l}\mathcal L(g)\,.\end{equation}
Multiplying (\ref{g2}) by $\partial^l g$ and integrating on $x$ and $v$, then
\begin{equation}\label{g3}\frac{\epsilon^2}{2} \partial_t ||\partial^l g||_{L^2_{x,v}}^2 = \langle \partial^{l}\mathcal L(g), \, \partial^{l}g\rangle_{L^2_{x,v}}, 
\end{equation}
where integration by parts and periodic boundary condition in $x$ are used, thus $\langle v\cdot\nabla_x(\partial^{l}g), \, \partial^l g\rangle_{L^2_{x,v}}=0$. 
We first prove the following Lemma. 
\begin{lemma}
\label{LL1}
For any $m\geq 0$, there exist $m$ constants $C_{jm}>0$, $j=0, \cdots, m-1$ such that 
\begin{align}
\label{MI}\displaystyle \epsilon^2 \partial_t \bigg(||\partial^m g||_{L^2_{x,v}}^2 + \sum_{j=0}^{m-1}C_{jm}\, ||\partial^j g||_{L^2_{x,v}}^2\bigg) \leq 
\begin{cases}   
  -2\lambda\, ||g^{\perp}||_{\Lambda}^2\,,\qquad m=0, \\[4pt]
  - \lambda\, ||\partial^m g^{\perp}||_{\Lambda}^2 \,,\quad m\geq 1. 
\end{cases}
 \end{align}
 \end{lemma}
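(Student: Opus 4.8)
The plan is to start from the single-order energy identity \eqref{g3}, namely $\frac{\epsilon^2}{2}\partial_t\|\partial^m g\|_{L^2_{x,v}}^2=\langle\partial^m\mathcal L(g),\,\partial^m g\rangle_{L^2_{x,v}}$, in which the streaming term has already been eliminated by integration by parts and periodicity in $x$ (this persists after $\partial^m$ since $v\cdot\nabla_x$ is $z$-independent). The one genuinely new feature relative to the deterministic case of \cite{MB} is that $\mathcal L=\mathcal L_z$ depends on $z$ through the kernel $b(\cos\theta,z)$, so $\partial^m$ falls on both the kernel and on $g$. Since $b$ enters $\mathcal L$ linearly inside the collision integral \eqref{L0}, Leibniz' rule gives $\partial^m\mathcal L(g)=\sum_{i=0}^m\binom{m}{i}\mathcal L^{(i)}(\partial^{m-i}g)$, where $\mathcal L^{(i)}$ denotes the linearized Boltzmann operator with $b$ replaced by $\partial_z^i b$ (so $\mathcal L^{(0)}=\mathcal L$). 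Pairing with $\partial^m g$ splits the right-hand side into the diagonal term $\langle\mathcal L(\partial^m g),\partial^m g\rangle_{L^2_{x,v}}$ and the off-diagonal remainder $\sum_{i=1}^m\binom{m}{i}\langle\mathcal L^{(i)}(\partial^{m-i}g),\partial^m g\rangle_{L^2_{x,v}}$.

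The diagonal term is controlled by coercivity \eqref{CoerB}: because $\Pi_{\mathcal L}$ acts through the $z$-independent polynomials $\varphi_i$, it commutes with $\partial^m$, so $(\partial^m g)^\perp=\partial^m g^\perp$ and $\langle\mathcal L(\partial^m g),\partial^m g\rangle_{L^2_{x,v}}\leq-\lambda\|\partial^m g^\perp\|_{\Lambda}^2$. For the off-diagonal terms I would establish the analogue of \eqref{LLL} for each differentiated operator, namely $\langle\mathcal L^{(i)}(\partial^{m-i}g),\partial^m g\rangle_{L^2_{x,v}}\leq C\,\|\partial^{m-i}g^\perp\|_{\Lambda}\|\partial^m g^\perp\|_{\Lambda}$, with a $z$-independent constant $C$ determined by $C_b^{\ast}$ from \eqref{BK1} rather than by $C_b$. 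This is the step I expect to be the main obstacle, and it is where the $\Lambda$-norm is essential: the reduction to microscopic parts requires both that $\mathcal L^{(i)}$ integrates to zero against every $\varphi_j$ (so that $\Pi_{\mathcal L}$ may be dropped from the second slot) and that $\mathcal L^{(i)}$ annihilates $N(\mathcal L)$ (so that it may be dropped from the first slot), exactly as in the derivation of \eqref{LLL}. Both properties follow from detailed balance $\mathcal M'\mathcal M_*'=\mathcal M\mathcal M_*$ and from the collision invariance $\psi+\psi_*=\psi'+\psi_*'$ for $\psi\in\{1,v,|v|^2\}$, which are geometric facts independent of the particular kernel; hence they persist when $b$ is replaced by $\partial_z^i b$, and the bilinear bound \eqref{LL} transfers to $\mathcal L^{(i)}$ with $C_b^{\ast}$ in place of $C_b$.

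Combining these estimates, multiplying \eqref{g3} by two, and splitting each cross term by Young's inequality, $2C\binom{m}{i}\|\partial^{m-i}g^\perp\|_{\Lambda}\|\partial^m g^\perp\|_{\Lambda}\leq\frac{\lambda}{m}\|\partial^m g^\perp\|_{\Lambda}^2+\frac{m}{\lambda}C^2\binom{m}{i}^2\|\partial^{m-i}g^\perp\|_{\Lambda}^2$, so that the summed cross terms consume exactly half of the doubled coercive budget $2\lambda\|\partial^m g^\perp\|_{\Lambda}^2$, I would obtain the single-order inequality
\begin{equation*}
\epsilon^2\partial_t\|\partial^p g\|_{L^2_{x,v}}^2\leq-\lambda\|\partial^p g^\perp\|_{\Lambda}^2+\sum_{k=0}^{p-1}D_k^{(p)}\|\partial^k g^\perp\|_{\Lambda}^2
\end{equation*}
for every $p\geq1$, with $z$-independent constants $D_k^{(p)}\geq0$; for $p=0$ the cross terms are absent and \eqref{g3} yields $-2\lambda\|g^\perp\|_{\Lambda}^2$ directly, which is the stated $m=0$ case.

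Finally, to produce the clean dissipation for $m\geq1$ I would form $E_m=\|\partial^m g\|_{L^2_{x,v}}^2+\sum_{j=0}^{m-1}C_{jm}\|\partial^j g\|_{L^2_{x,v}}^2$ and fix the constants $C_{jm}$ by downward induction on $j$, from $j=m-1$ to $j=0$. Summing the single-order inequalities, the coefficient of $\|\partial^p g^\perp\|_{\Lambda}^2$ in $\epsilon^2\partial_t E_m$ equals $-\lambda C_{pm}+D_p^{(m)}+\sum_{j=p+1}^{m-1}C_{jm}D_p^{(j)}$ for $0\leq p\leq m-1$, and equals $-\lambda$ for $p=m$. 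At each stage the constant $C_{pm}$ being selected need only dominate quantities built from the already-fixed $C_{jm}$ with $j>p$, so one may take $C_{pm}\geq\lambda^{-1}\big(D_p^{(m)}+\sum_{j=p+1}^{m-1}C_{jm}D_p^{(j)}\big)$, rendering every lower-order coefficient nonpositive. All surviving microscopic contributions then cancel, leaving $\epsilon^2\partial_t E_m\leq-\lambda\|\partial^m g^\perp\|_{\Lambda}^2$, which is precisely \eqref{MI} for $m\geq1$. Since every constant depends only on $\lambda$, $C$, and the multi-index combinatorics, the result is uniform in both $\epsilon$ and $z$.
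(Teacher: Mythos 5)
Your proposal is correct and follows essentially the same route as the paper: Leibniz splitting of $\partial^m\mathcal L(g)$ into the diagonal operator plus operators built from $\partial_z^i b$, coercivity for the diagonal term, the \eqref{LLL}-type bilinear bound (with the constant now controlled by $C_b^{\ast}$) after reducing both slots to microscopic parts, Young's inequality consuming half of the dissipation, and absorption of the lower-order $\|\partial^j g^{\perp}\|_{\Lambda}^2$ remainders by a weighted combination of lower-order energies. The only difference is organizational: the paper runs a forward induction on $m$ and adds the already-established inequalities (its \eqref{p0}) scaled by $4^{p+1}(C^{\mathcal L,\ast})^2/\lambda^2$, whereas you solve the same triangular system for the $C_{jm}$ by downward recursion, which is equivalent.
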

 \begin{proof}
We will prove this lemma by using Mathematical Induction. \\
When $m=0$, by (\ref{g3}), (\ref{MI}) holds because of Assumption ${\bf H3}$. 
Assume that (\ref{MI}) holds for any $m\leq p$, where $p\in \mathbb N$. Adding all these inequalities, we get
$$ \epsilon^2 \partial_t \bigg(\frac{1}{2}||g||_{L^2_{x,v}}^2 + \sum_{m=1}^{p} ||\partial^m g||_{L^2_{x,v}}^2 + 
\sum_{m=1}^{p} \sum_{j=0}^{m-1} C_{jm}\, ||\partial^j g||_{L^2_{x,v}}^2 \bigg)\leq -\lambda\,||g^{\perp}||_{\Lambda^{p}}\,,$$
which is equivalent to 
\begin{equation}\label{p0}\epsilon^2 
\partial_t \bigg(\sum_{j=0}^p C_{j,p+1}^{\prime}\, ||\partial^j g||_{L^2_{x,v}}^2 \bigg) \leq -\lambda\, ||g^{\perp}||_{\Lambda^{p}}^2\,,
\end{equation}
where 
\begin{align*}
\displaystyle C_{j,p+1}^{\prime} =
 \begin{cases} \frac{1}{2} + \sum_{m=1}^{p} C_{0m}, \qquad j=0, \\[2pt]
                        1 + \sum_{m=1}^{p} C_{jm}, \qquad 1\leq j\leq p-1, \\[2pt]       
                        1, \qquad\qquad\qquad\qquad j=p.           
 \end{cases}
\end{align*}
Denote
\begin{equation}\label{Li}\mathcal L_{i}(g) = M \int_{\mathbb R^d\times S^{d-1}} \phi(|v-v_{\ast}|)\partial^{p+1-i}b\, \mathcal M(v_{\ast}) 
\left[\frac{\partial^i g_{\ast}^{\prime}}{M_{\ast}^{\prime}}+\frac{\partial^i g^{\prime}}{M^{\prime}}-\frac{\partial^i g_{\ast}}{M_{\ast}}-\frac{\partial^i g}{M}\right] dv_{\ast}d\sigma\,,\end{equation}
where $|\partial^{p+1-i}b|\leq C_b^{\ast}$ as shown in (\ref{BK1}).

When $m=p+1$, let $l=p+1$ in (\ref{g3}), and the right-hand-side of (\ref{g3}) has the following estimate: 
\begin{align}
&\displaystyle \quad\langle \partial^{p+1}\mathcal L(g), \, \partial^{p+1}g\rangle_{L^2_{x,v}} = \langle\mathcal L(\partial^{p+1} g), \, \partial^{p+1} g\rangle_{L^2_{x,v}}
+ \sum_{i=0}^{p}\binom{p+1}{i} \langle\mathcal L_{i}(g), \, \partial^{p+1} g\rangle_{L^2_{x,v}}\notag\\[4pt]
&\displaystyle  \leq -\lambda\, ||\partial^{p+1} g^{\perp}||_{\Lambda}^2 + C^{\mathcal L, \ast}\, 
||\sum_{i=0}^{p} \binom{p+1}{i}\, \partial^{i}g^{\perp}||_{\Lambda}\, ||\partial^{p+1} g^{\perp}||_{\Lambda}\notag\\[4pt]
&\displaystyle \leq  -\lambda\, ||\partial^{p+1} g^{\perp}||_{\Lambda}^2 + \frac{(C^{\mathcal L, \ast})^2}{2\lambda}\, 
||\sum_{i=0}^{p} \binom{p+1}{i}\, \partial^{i}g^{\perp}||_{\Lambda}^2+\frac{\lambda}{2}\, ||\partial^{p+1} g^{\perp}||_{\Lambda}^2 \notag\\[4pt]
&\label{p+1}\displaystyle \leq -\frac{\lambda}{2}\, ||\partial^{p+1} g^{\perp}||_{\Lambda}^2 + 
4^{p+1}\,\frac{(C^{\mathcal L, \ast})^2}{2\lambda}\, \sum_{i=0}^{p}\, ||\partial^{i}g^{\perp}||_{\Lambda}^2 
= -\frac{\lambda}{2}\, ||\partial^{p+1} g^{\perp}||_{\Lambda}^2+
4^{p+1}\,\frac{(C^{\mathcal L, \ast})^2}{2\lambda}\, ||g^{\perp}||_{\Lambda^{p}}\,,
\end{align}
where we used (\ref{LLL}) with $C^{\mathcal L}$ substituted by $C^{\mathcal L, \ast}$ that depends on $C_b^{\ast}$, since $\mathcal L_{i}$, as defined by 
(\ref{Li}), has $z$-derivatives of $b$ involved. Young's inequality is used in the second inequality, and 
the Cauchy-Schwartz inequality, namely 
$$||\sum_{i=0}^{p}\binom{p+1}{i}\, \partial^{i}g^{\perp}||_{\Lambda}^2 \leq \sum_{i=0}^{p}\binom{p+1}{i}^2\, \sum_{i=0}^{p} ||\partial^{i}g^{\perp}||_{\Lambda}^2 
\leq 4^{p+1}\, \sum_{i=0}^{p} ||\partial^{i}g^{\perp}||_{\Lambda}^2 $$
is used in the third inequality.
Therefore, 
\begin{equation}\label{pp}\epsilon^2 \partial_t ||\partial^{p+1}g||_{L^2_{x,v}}^2 \leq -\lambda\, ||\partial^{p+1}g^{\perp}||_{\Lambda}^2 + 4^{p+1}\frac{(C^{\mathcal L, \ast})^2}{\lambda}\, ||g^{\perp}||_{\Lambda^{p}}\,.\end{equation}

Multiplying (\ref{p0}) by 
$\displaystyle\frac{4^{p+1}(C^{\mathcal L, \ast})^2}{\lambda^2}$ and adding to (\ref{pp}), one has
$$\epsilon^2\partial_t \bigg( ||\partial^{p+1}g||_{L^2_{x,v}}^2 + \sum_{j=0}^{p}C_{j,p+1}\, ||\partial^j g||_{L^2_{x,v}}^2\bigg) 
\leq -\lambda\, ||\partial^{p+1}g^{\perp}||_{\Lambda}^2\,,$$
where 
$\displaystyle C_{j,p+1}=\frac{4^{p+1}(C^{\mathcal L, \ast})^2}{\lambda^2}\, C_{j,p+1}^{\prime}$. 
This shows that (\ref{MI}) holds when $m=p+1$. By Mathematical Induction, (\ref{MI}) is true for all $m\in \mathbb N$, thus 
Lemma \ref{LL1} is proved. 
\end{proof}

As a consequence, adding up (\ref{MI}) for $m\leq r$, with $C_{m,r+1}^{\prime}=\widetilde C_{m,r+1}>0$ in (\ref{Cm}) and (\ref{p0}), 
one obtains the goal estimate (\ref{Cm}). Define 
\begin{equation}\label{weight} ||g||_{L_{x,v}^{2,r\ast}}:= \sum_{m=0}^{r} \widetilde C_{m,r+1}\, ||\partial^m g||_{L^2_{x,v}}, \end{equation}
then \begin{equation}\label{LL2}\epsilon^2 \partial_t ||g||_{L_{x,v}^{2,r\ast}} \leq -\lambda\, ||g^{\perp}||_{\Lambda^r}^2 \leq -\lambda\, c\, ||g^{\perp}||_{\Lambda^{r\ast}}^2\,, \end{equation}
where the second inequality is because (\ref{weight})--defined as some {\it weighted} Sobolev norm
$||\cdot||_{H_z^{r\ast}}$-- is equivalent to the standard Sobolev norm $||\cdot||_{H_z^{r}}$ in the random space. 

The above analysis also holds for $||\nabla_x g||_{L_{x,v}^{2,r}}$ etc, in the definition of $||g||_{\mathcal H_{\epsilon_{\perp}}^{1,r}}$. For example, 
$$\epsilon^2 \partial_t ||\nabla_x g||_{L_{x,v}^{2,r\ast}} \leq -\lambda\, c\, ||\nabla_x g^{\perp}||_{\Lambda^{r\ast}}^2\,. $$
One can extend to higher Sobolev space $\mathcal H_{\epsilon_{\perp}}^{s,r}$ and get the conclusions in Theorem \ref{thm2}. \\[4pt]

The impact of $z$ in $b(\cos\theta,z)$ on the nonlinear term can be estimated similarly to (\ref{G0}), namely one has $|\langle\partial^{m}\partial_i^{j}\mathcal F^{-}, \, f\rangle| \leq C\, \mathcal G^{s,m}_{x,v,z}(h,h)\, ||f||_{\Lambda}$, 
where 
\begin{align}
&\displaystyle \mathcal G^{s,m}_{x,v,z}(h,h) =
\left|\sum_{i=0}^{m}\binom{m}{i} \partial^{m-i}b \, \sum_{n=0}^{i}\binom{i}{n}
\sum_{|j_1|+|l_1|+|j_2|+|l_2|\leq s}\left(\int_{\mathbb T^{d}} ||\partial^{n}\partial_{l_2}^{j_2}h||_{\Lambda_v}^2\, 
||\partial^{i-n}\partial_{l_1}^{j_1}h||_{L^2_v}^2\, dx\right)^{1/2}\right| \notag\\[4pt]
&\displaystyle\qquad\qquad\leq C_b^{\ast}\, \sum_{i=0}^{m}\sum_{n=0}^{i}\binom{m}{i}\binom{i}{n}\, C_s\, ||\partial^n h||_{H_{x,v}^{s}}\, ||\partial^{i-n}h||_{H_{\Lambda}^{s}} \notag\\[4pt]
&\label{H5_b}\displaystyle\qquad\qquad\leq C_b^{\ast}\, \widetilde C_{s,r}\, ||h||_{H_{x,v}^{s,r}}\, ||h||_{H_{\Lambda}^{s,r}}
\leq C_b^{\ast}\, \widetilde C_{s,r}\, c^{\prime}\, ||h||_{H_{x,v}^{s,r\ast}}\, ||h||_{H_{\Lambda}^{s,r\ast}}\,,
\end{align}
with the constant $\widetilde C_{s,r}$ depending on $s$ and $r$, and the last inequality is because $||\cdot||_{H_z^{r}}\sim ||\cdot||_{H_z^{r\ast}}$. 
One can also assume $\phi$ depending on $z$,
and easily obtain a similar estimate upon a suitable assumption on $C_{\phi}$ in (\ref{BK1}). We omit the details.
With the estimates (\ref{LL2}) and (\ref{H5_b}) and an extension to the higher-order Sobolev space in $(x,v)$, one gets
$$||h||_{H_{x,v}^{s} H_z^{r\ast}} \leq  C_{I}\, e^{-\widetilde\tau_s t}. $$ 
Since $||\cdot||_{H_{x,v}^{s} H_z^{r\ast}}$ is equivalent to the standard Sobolev norm $||\cdot||_{H_{x,v}^{s} H_z^{r}}$ in $(x,v,z)$-space, thus
$$||h||_{H_{x,v}^{s} H_z^{r}} \leq  C_{I}\, e^{-\widetilde\tau_s t}, $$ 
where $\widetilde\tau_s>0$ is a constant independent of $z$ and $\epsilon$. 
Therefore, Theorem \ref{thm2} holds true if the uncertainty comes from random collision kernel. 

\begin{remark}
For the linear Fokker-Planck equation with small scalings, 
\begin{equation}\label{FP}\partial_t f +\frac{1}{\epsilon^{\alpha}}v\cdot\nabla_x f = \frac{\sigma(z)}{\epsilon^{1+\alpha}}\, \nabla_v\cdot (\nabla_v f + fv)\,,
\end{equation}
where $\alpha=0$ or $1$, the coercivity norm $\Lambda$ is given in \cite{MB} for deterministic problem, 
$$||h||_{\Lambda}^2 = ||vh||_{L^2_{x,v}}^2 + ||\nabla_v h||_{L^2_{x,v}}^2\,. $$
The analysis in this section can be applied to (\ref{FP}) with random diffusion coefficient $\sigma$, 
under the assumption
$|\partial_z^k \sigma|\leq C_{\sigma}$ for $k=0, \cdots, r$ with $C_{\sigma}$ a positive constant independent of $z$. 
\end{remark}

To conclude, we summarize our results in the following Theorem:

\begin{theorem}
\label{random_kernel}
If the uncertainties come from both initial data and/or collision kernel, under our assumptions
for the collision kernel (\ref{BK1}), and for the initial data 
$||h_{in}||_{H_{x,v}^{s,r} L_z^{\infty}} \leq C_{I}$, then we have the following: \\
(i) Under the incompressible Navier-Stokes scaling, 
$$||h||_{H_{x,v}^{s,r} L_z^{\infty}}\leq C_I\, e^{-\tau_s t}\,, \qquad
||h||_{H_{x,v}^{s} H_z^r} \leq C_I\,  e^{-\tau_s t}\,.$$
(ii) Under the acoustic scaling, 
$$||h||_{H_{x,v}^{s,r} L_z^{\infty}}\leq C_I\, e^{-\epsilon\tau_s t}\,, \qquad
||h||_{H_{x,v}^{s} H_z^r} \leq C_I\,  e^{-\epsilon\tau_s t}\,,$$
where $C_I$, $\tau_s$ are positive constants independent of $\epsilon$. 
\end{theorem}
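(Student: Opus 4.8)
The plan is to assemble Theorem \ref{random_kernel} from the ingredients already established, treating the two sources of randomness separately and then observing that they combine without new difficulty, since every estimate is performed pointwise in $z$ with constants that are uniform in $z$.

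For uncertainty from the initial data alone the conclusion is precisely Theorem \ref{thm2}: under the incompressible Navier-Stokes scaling one has $||h||_{H_{x,v}^{s,r}L_z^{\infty}} \leq C_I\, e^{-\tau_s t}$, and the $H_z^r$ bound follows by integrating against $\pi(z)\,dz$ exactly as in (\ref{Hz}). The acoustic version (ii) then follows from the scaling observation recorded in the Remark after Theorem \ref{thm2}: multiplying the right-hand sides of (\ref{pr1})--(\ref{pr2}) by $\epsilon$ replaces $\tau_s$ by $\epsilon\tau_s$ throughout the Gronwall step.

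For uncertainty from the collision kernel I would isolate the single place where the proof of Theorem \ref{thm2} must be modified: applying $\partial^m$ to the linearized operator $\mathcal L$ now also differentiates $b$, producing the extra terms $\mathcal L_i(g)$ of (\ref{Li}). These are controlled by Lemma \ref{LL1}, which I would prove by induction on $m$ as stated. The base case $m=0$ is Assumption \textbf{H3} applied to (\ref{g3}); for the inductive step I would use (\ref{LLL}) (encoding Assumption \textbf{H1}) together with $|\partial_z^k b|\leq C_b^{\ast}$ to bound each cross term $\langle\mathcal L_i(g),\partial^{p+1}g\rangle_{L^2_{x,v}}$ in the $\Lambda$-norm, then split by Young's and Cauchy-Schwarz inequalities as in (\ref{p+1}). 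The weights $C_{j,p+1}$ are chosen so that adding a suitable multiple of the accumulated lower-order estimate (\ref{p0}) absorbs the remainder $||g^{\perp}||_{\Lambda^p}$, leaving only $-\lambda\,||\partial^{p+1}g^{\perp}||_{\Lambda}^2$. Summing over $m\leq r$ yields the weighted norm (\ref{weight}) and the dissipation bound (\ref{LL2}), while the nonlinear term is handled by (\ref{H5_b}), which reproves Assumption \textbf{H5} with the enlarged constant $C_b^{\ast}$. Feeding (\ref{LL2}) and (\ref{H5_b}) into the Gronwall argument of the proof of Theorem \ref{thm2}(2), and using the equivalences $||\cdot||_{H_z^{r\ast}}\sim||\cdot||_{H_z^r}$ and $||\cdot||_{\mathcal H_{\epsilon_{\perp}}^{s,r}}\sim||\cdot||_{H_{x,v}^{s,r}}$, one again obtains a differential inequality of the form $\frac{d}{dt}||h||_{\mathcal H_{\epsilon_{\perp}}^{s,r}}^2 \leq \left(K_1^{(s)}C_{\mathcal F}\,C\,||h||_{\mathcal H_{\epsilon_{\perp}}^{s,r}}^2 - K_0^{(s^{\ast})}\right)||h||_{H_{\Lambda}^{s,r}}^2$, which under the smallness of $C_I$ gives the exponential decay; the acoustic case follows from the same factor-$\epsilon$ observation. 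The combined case, with both the initial data and the kernel random, requires no further estimate: $\partial^m$ distributes over the initial data, the linear operator and the nonlinear term simultaneously, and since all constants are $z$-independent the identical Gronwall step closes.

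The main obstacle I expect is the inductive bookkeeping in Lemma \ref{LL1}. One must track how the cross terms $\mathcal L_i(g)$, generated at each order by differentiating $b$ in $z$, cascade to all lower orders, and then calibrate the weights $C_{j,p+1}$ so that the accumulated lower-order dissipation exactly dominates them. The structural input that makes this possible is (\ref{LLL}): it allows $\langle\mathcal L(h),g\rangle_{L^2_{x,v}}$ to be estimated through the microscopic parts $h^{\perp},g^{\perp}$ in the $\Lambda$-norm rather than in $L^2_{x,v}$, so that the cross terms land in the same norm as the available dissipation. Without this refinement the remainder would be measured in a norm too weak to be absorbed, and the induction would not close.
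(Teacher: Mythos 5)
Your proposal follows the paper's own route essentially step for step: Theorem \ref{thm2} (plus the factor-$\epsilon$ remark) disposes of the random-initial-data case, while the random-kernel case is reduced to Lemma \ref{LL1}, proved by induction with the weights $C_{j,p+1}$ calibrated so that a multiple of the accumulated lower-order estimate (\ref{p0}) absorbs the cross terms $\mathcal L_i(g)$, with (\ref{LLL}) supplying exactly the $\Lambda$-norm control you identify as the crucial structural input, and (\ref{H5_b}) handling the nonlinear term before the same Gronwall closure. This matches the paper's argument in section \ref{kernel}, so no further comparison is needed.
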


\section{Spectral Accuracy of the gPC-SG Method}
\label{gPC}
\subsection{A gPC based Stochastic Galerkin Method}

In this subsection, we review the gPC-SG method for solving kinetic equations with uncertainties.
Take the Boltzmann equation as an example \cite{Hu}. One seeks for a solution in the following form:
\begin{align}
&\displaystyle  f(t,x,v,z)\approx \sum_{|\bk|=1}^{K}\, f_{\bk}(t,x,v)\psi_{\bk}(z) := f^K(t,x,v,z), \notag\\[2pt]
&\displaystyle \label{ans} h(t,x,v,z)\approx \sum_{|\bk|=1}^{K}\, h_{\bk}(t,x,v)\psi_{\bk}(z) := h^K(t,x,v,z).
\end{align}
Here $\bk=(k_1, \cdots, k_n)$ is a multi-index with $|\bk|=k_1+\cdots k_n$. $\pi(z)$ is the probability distribution function of $z$, which is given
{\it a priori} in our problem.
$\{\psi_{\bk}(z)\}$ are orthonormal gPC basis functions satisfying
$$\int_{I_z}\, \psi_{\bk}(z)\psi_{\bj}(z)\, \pi(z)dz=\delta_{\bk\bj}, \qquad 1\leq |\bk|, \, |\bj|\leq K. $$
One can expand $f$ by
$$f(t,x,v,z)=\sum_{|\bk|=1}^{\infty} \hat f_{\bk}(t,x,v)\psi_{\bk}(z), \qquad \hat f_{\bk}(t,x,v)=\int_{I_z}\, f(t,x,v,z)\psi_{\bk}(z)\,\pi(z)dz. $$
Define the projection operator $P_{K}$ as
\begin{equation}\label{proj} P_{K} f(t,x,v,z)=\sum_{|\bk|=1}^K\, \hat f_{\bk}(t,x,v)\psi_{\bk}(z). \end{equation}
Assume the random collision kernel has the assumptions given by (\ref{BK1}).
Inserting ansatz (\ref{ans}) into (\ref{INS-scaling})  and conducting a standard Galerkin projection, one obtains the gPC-SG
system for $h_{\bk}$:
\begin{align}
\label{h_gPC1}
\left\{
\begin{array}{l}
\displaystyle \partial_t h_{\bk} + \frac{1}{\epsilon}v\cdot\nabla_x h_{\bk} =\frac{1}{\epsilon^2}\mathcal L_{\bk}(h^K) +
\frac{1}{\epsilon}\mathcal F_{\bk}(h^K, h^K),
\\[2pt]
\displaystyle  h_{\bk}(0,x,v)=h_{\bk}^{0}(x,v), \qquad x\in\Omega\subset\mathbb T^d,  \, v\in \mathbb R^d,
\end{array}\right.
\end{align}
for each $1\leq |\bk|\leq K$, with a periodic boundary condition and the initial data given by
$$h_{\bk}^{0} := \int_{I_z}\, h^{0}(x,v,z)\psi_{\bk}(z)\, \pi(z)dz. $$
The collision parts are given by
\begin{align}
\begin{array}{l}
\label{LFK}
\displaystyle \mathcal L_{\bk}(h^K)=K_{\bk}(h^K)-\Lambda_{\bk}(h^K), \qquad
K_{\bk}(h^K)=\mathcal L^{+}_{\bk}(h^K) -\mathcal L^{\ast}_{\bk}(h^K), \qquad
\Lambda_{\bk}(h^K)=\sum_{|\bi|=1}^{K}\, \nu_{\bk\bi}\, h_{\bi}, \\[1pt]
\displaystyle \mathcal L^{+}_{\bk}(h^K)=\sum_{|\bi|=1}^{K}\,\int_{\mathbb R^d\times\mathbb S^{d-1}}\,\widetilde S_{\bk\bi}\,
 \phi(|v-v_{\ast}|)\, (h_{\bi}(v^{\prime}) M(v_{\ast}^{\prime}) + h_{\bi}(v_{\ast}^{\prime})M(v^{\prime}))\, M(v_{\ast})\, dv_{\ast}d\sigma, \\[1pt]
 \displaystyle \mathcal L^{\ast}_{\bk}(h^K) = M(v)\, \sum_{|\bi|=1}^{K}\,\int_{\mathbb R^d\times\mathbb S^{d-1}}\, \widetilde S_{\bk\bi}\,
 \phi(|v-v_{\ast}|)\, h_{\bi}(v_{\ast}) M(v_{\ast})\, dv_{\ast}d\sigma, \\[1pt]
\displaystyle \mathcal F_{\bk}(h^K, h^K) (t,x,v)=\sum_{|\bi|, |\bj|=1}^{K}\,\int_{\mathbb R^d\times\mathbb S^{d-1}}\, S_{\bk\bi\bj}\,
\phi(|v-v_{\ast}|)\, M(v_{\ast})\, (h_{\bi}(v^{\prime})h_{\bj}(v_{\ast}^{\prime}) - h_{\bi}(v)h_{\bj}(v_{\ast}))\, dv_{\ast}d\sigma,
\end{array}
\end{align}
with
\begin{align*}
\begin{array}{l}
\displaystyle\widetilde S_{\bk\bi}:=\int_{I_z}\, b(\cos\theta,z)\, \psi_{\bk}(z)\psi_{\bi}(z)\, \pi(z)dz, \qquad
\nu_{\bk\bi}:=\int_{\mathbb R^d\times\mathbb S^{d-1}}\, \widetilde S_{\bk\bi}\, \phi(|v-v_{\ast}|)\, \mathcal M(v_{\ast})\, dv_{\ast}d\sigma,  \\[2pt]
\displaystyle \qquad\text{and}\qquad\quad S_{\bk\bi\bj}:=\int_{I_z}\,  b(\cos\theta,z)\, \psi_{\bk}(z)\psi_{\bi}(z)\psi_{\bj}(z)\, \pi(z)dz.
\end{array}
\end{align*}

\subsection{Hypocoercivity Estimate of the gPC Solution}
\label{gPC1}
In this and the next sections, we assume $z\in I_z$ is one dimensional and $I_z$ has finite support $|z|\leq C_z$ (which is the case, for example, for the uniform and Beta distributions). Let us first introduce the main result of this section on the estimate of the gPC solution:
\begin{theorem}
\label{thm3}
Assume the collision kernel $B$ satisfies (\ref{BK1}) and is linear in $z$, with the form of
\begin{equation}\label{b1} b(\cos\theta, z)= b_0(\cos\theta)+  b_1(\cos\theta)z\,,  \end{equation}
with $|\partial_z b|\leq \mathcal O(\epsilon)$.
We also assume the technical condition
\begin{equation}\label{basis}
||\psi_k||_{L^{\infty}} \leq C k^p, \qquad \forall\, k,  \end{equation}
with a parameter $p \geq 0$. Let $q>p+2$, define the energy $E^{K}$ by
\begin{equation}\label{E^K}
  E^{K}(t) = E_{s,q}^{K}(t) = \sum_{k=1}^{K}\, ||k^q h_k||_{H_{x,v}^s}^2,
\end{equation}
with the initial data satisfying $E^{K}(0) \leq \eta$.
Then for all $s\geq s_0$, $0\leq \epsilon_d\leq 1$, such that
for $0\leq \epsilon\leq \epsilon_d$, if $h^K$ is a gPC solution of (\ref{h_gPC1}) in
$H_{x,v}^s$, we then have the following:  \\
(i) Under the incompressible Navier-Stokes scaling, 
$$E^{K}(t)\leq \eta \, e^{-\tau t}\,. $$
(ii) Under the acoustic scaling, 
$$E^{K}(t) \leq \eta \, e^{-\epsilon \tau t}\,, $$
where $\eta$, $\tau$ are all positive constants that only depend on $s$ and $q$, independent of $K$ and $z$.
\end{theorem}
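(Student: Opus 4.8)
The plan is to transplant the hypocoercivity argument behind Theorem \ref{thm2} to the coupled gPC-SG system (\ref{h_gPC1}), with the polynomial index $k$ now playing the role that the $z$-derivatives played before. Since $z$ is one-dimensional, the multi-index reduces to a scalar and the weighted energy (\ref{E^K}) is the natural analogue of the $H_z^r$-norm. First I would build, on the coefficient vector $(h_\bk)_{1\le k\le K}$, a modified energy $\mathcal E^K=\sum_{k=1}^K k^{2q}\,||h_\bk||_{\mathcal H_\epsilon^s}^2$ using the same $\epsilon$-weighted norm $||\cdot||_{\mathcal H_\epsilon^s}$, including its mixing term, as in Theorem \ref{thm1}. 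By part (2) of Theorem \ref{thm1} this norm is equivalent to the plain weighted Sobolev energy $E^K$ uniformly in $\epsilon$ and $K$, so it suffices to prove exponential decay of $\mathcal E^K$.

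The decisive structural fact is that the linearity (\ref{b1}) of $b$ in $z$ splits the coupling matrix as $\widetilde S_{\bk\bi}=b_0(\cos\theta)\,\delta_{\bk\bi}+b_1(\cos\theta)\int_{I_z}z\,\psi_\bk\psi_\bi\,\pi\,dz$. Consequently $\mathcal L_\bk$ decomposes into a diagonal piece, which is exactly the deterministic linearized Boltzmann operator with kernel $b_0$ acting on the single mode $h_\bk$, plus an off-diagonal remainder carrying the factor $b_1$. The diagonal piece supplies, mode by mode, precisely the dissipation estimate of Proposition \ref{prop} and Theorem \ref{thm2}, namely $-\frac{1}{\epsilon^2}K_0^{(s)}||h_\bk^\perp||_{H_\Lambda^s}^2$ together with the macroscopic control furnished by the mixing term. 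The off-diagonal remainder carries the inter-mode coupling; because the $\psi_k$ obey a three-term recurrence, $\int_{I_z}z\,\psi_\bk\psi_\bi\,\pi\,dz$ is tridiagonal, so only neighbouring indices $|\bk-\bi|\le 1$ interact and the weight ratio $(k/i)^q$ stays bounded. Since $|\partial_z b|=|b_1|\le\mathcal O(\epsilon)$, after the $1/\epsilon^2$ prefactor this remainder is of order $1/\epsilon$ only; invoking (\ref{LLL}) and Young's inequality one absorbs it into the $1/\epsilon^2$ diagonal dissipation, uniformly in $\epsilon$ and $K$, provided $\epsilon\le\epsilon_d$.

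For the nonlinear contribution $\sum_{k}k^{2q}\langle\mathcal F_\bk(h^K,h^K),h_\bk\rangle$ I would estimate the triple-product weight $S_{\bk\bi\bj}=\int_{I_z}b\,\psi_\bk\psi_\bi\psi_\bj\,\pi\,dz$ by extracting one basis factor in $L^\infty$ via (\ref{basis}) and applying orthonormality and Cauchy--Schwarz to the remaining two, exactly as in the Shu--Jin treatment \cite{Rui}. Combining this with the convolution constraint that keeps $k$ comparable to $i+j$ on the nonvanishing terms and with the $\Lambda$-norm bound for $\mathcal F$ established through Assumption {\bf H5} and section \ref{Proof_H4}, the growth margin $q>p+2$ is exactly what makes the resulting weighted sum over the polynomial index a convergent discrete convolution with constant independent of $K$; this yields a bound of the form $C\,E^K\,\mathcal D^K$ on the weighted nonlinear term, where $\mathcal D^K=\sum_{k}k^{2q}||h_\bk||_{H_\Lambda^s}^2$ is the weighted dissipation.

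Assembling the linear and nonlinear pieces produces a differential inequality $\frac{d}{dt}\mathcal E^K\le\big(C\,E^K-K_0^{(s)}\big)\mathcal D^K$, and since $\mathcal D^K$ controls $\mathcal E^K$ from below, choosing the threshold $\eta$ small enough that the bracket is negative whenever $E^K\le\eta$, the continuation argument used for Theorem \ref{thm2} keeps $E^K$ below $\eta$ for all time; Gronwall's inequality then gives $E^K(t)\le\eta\,e^{-\tau t}$. The acoustic case follows by multiplying the right-hand side of the energy inequality by $\epsilon$, exactly as in the acoustic-scaling discussion following Theorem \ref{thm2}, which replaces $\tau$ by $\epsilon\tau$. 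The main obstacle throughout is securing uniformity in $\epsilon$ and $K$ simultaneously: one must verify that the coupling constants coming from $\widetilde S_{\bk\bi}$ and $S_{\bk\bi\bj}$ do not degrade as $K\to\infty$, and this is precisely what the smallness $|\partial_z b|=\mathcal O(\epsilon)$, the tridiagonal recurrence structure, and the margin $q>p+2$ are there to guarantee.
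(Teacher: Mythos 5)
Your proposal follows the paper's own route in all essentials: the tridiagonal structure of $\widetilde S_{ki}$ forced by the linearity (\ref{b1}); the $\mathcal O(\epsilon)$ smallness of the off-diagonal coupling, so that after the $1/\epsilon^2$ prefactor it is only $\mathcal O(1/\epsilon)$ and can be dominated by the dissipation; the Shu--Jin counting argument with one basis factor taken in $L^\infty$ via (\ref{basis}), the support constraint on $S_{mnk}$, and the margin $q>p+2$ to make the weighted triple sum convergent uniformly in $K$ (this is exactly the paper's Assumption ${\bf H6}$ and its verification); and finally the mode-by-mode transplant of the hypocoercivity functional with weight $k^{2q}$, Gronwall, and multiplication by $\epsilon$ for the acoustic case. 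Your use of (\ref{LLL}) to absorb the off-diagonal linear remainder directly into the microscopic dissipation is, if anything, slightly cleaner than the paper's bookkeeping, which bounds it by full $\Lambda$-norms and folds it into the nonlinear contribution.

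The one step that is wrong as written is the choice of functional and the equivalence you invoke. You define $\mathcal E^K=\sum_k k^{2q}\,||h_k||_{\mathcal H_{\epsilon}^s}^2$ with the norm of Theorem \ref{thm1} and claim that part (2) of that theorem makes it equivalent to $E^K$ uniformly in $\epsilon$ and $K$. It does not: part (2) gives equivalence to a norm in which every $v$-derivative carries an $\epsilon^2$ weight, so decay of $\mathcal E^K$ would control the $v$-derivatives of $h_k$ only up to a factor $1/\epsilon$ --- precisely the loss recorded in Remark \ref{RK2}. The uniform-in-$\epsilon$ conclusion of Theorem \ref{thm3} requires the modified functional $\mathcal H_{\epsilon_{\perp}}^s$ of (\ref{norm1}), in which the $v$-derivative terms act on the microscopic part $(\mathbb I-\Pi_{\mathcal L})h_k$ only; its uniform equivalence to $H^s_{x,v}$ is part (1) of Theorem \ref{thm2}, not part (2) of Theorem \ref{thm1}. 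Since you later invoke the dissipation estimates of Proposition \ref{prop} and Theorem \ref{thm2} --- which belong to the $\perp$-norm --- the repair is simply to build the weighted energy on $||h_k||_{\mathcal H_{\epsilon_{\perp}}^s}$ from the outset, as the paper does; but as stated the equivalence claim is false and the asserted $\epsilon$-uniformity would not follow from your functional.
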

\begin{remark}
The choice of energy  $E^K$ in (\ref{E^K}) enables one to obtain
the desired energy estimates with initial data independent of $K$ \cite{Rui}.
\end{remark}

To prove Theorem \ref{thm3} on estimate of the gPC solution, a modification of Assumption ${\bf H5}$ is necessary. 
\\[1pt]

{\bf Assumption H6:}
There exist constants $\delta, \, C(\delta)>0$ that are independent of $K$, such that
\begin{equation}\label{H6_1} \left|\,\sum_{k=1}^K\, k^{2q}\, \langle\partial_l^j \mathcal F_{k}(h^K, h^K), \, f_{k}\rangle_{L^2_{x,v}}\, \right|
\leq C(\delta)\, \sum_{m=1}^K\, ||m^q h_m||_{H_{\Lambda}^s}^2 \, \sum_{n=1}^K\, ||n^q h_n||_{H_{x,v}^s}^2 + \delta\,\sum_{k=1}^K\, ||k^q f_k||_{\Lambda}^2\,. \end{equation}
In order to obtain the estimate for the gPC coefficients $h_k$, we make the assumption (\ref{basis}) on the basis functions. Since $|b|\leq C_b$,
$|b_1|\leq \xi$ and $|z|\leq C_z$, then
\begin{equation}\label{S_mnk} |S_{mnk}|\leq (C_b + \xi C_z)\, ||\psi_n||_{L^{\infty}}\, \langle |\psi_m|, \, |\psi_k|\rangle_{L^2_z} \leq (C_b + \xi C_z)\, ||\psi_n||_{L^{\infty}}\, ||\psi_m||_{L^2_z}\, ||\psi_k||_{L^2_z}=\widetilde C\, n^p\,, \end{equation}
where $\widetilde C= C (C_b + \xi\, C_z)$ with $C$ given in (\ref{basis}).
We mention that this assumption was introduced in \cite{Rui}, and some examples where (\ref{S_mnk}) holds are given there.
For the case $I_z=[-1,1]$ with uniform distribution, $\psi_k$ is the normalized Legendre polynomials,
and (\ref{S_mnk}) holds with $p=1/2$. For the case $I_z=[-1,1]$ with the distribution $\pi(z)=\frac{2}{\sqrt{\pi \sqrt{1-z^2}}}$ and
$\psi_k$ are the normalized Chebyshev polynomials, (\ref{S_mnk}) holds with $p=0$.  \\[1pt]

Now since we assume that $B$ is linear in $z$ and $\psi_k$ is a $(k-1)$-th degree polynomial, orthogonal to all lower order polynomials, thus
$S_{mnk}=0$ if $(m-1)+(n-1)+1<k-1$. Then $S_{mnk}$ may be
nonzero only when the inequality
\begin{equation}\label{mnk} m+n\geq k \end{equation} holds.
Note that (\ref{S_mnk}) and (\ref{mnk}) also hold if $m, n, k$ are permuted, that is, when the inequalities
\begin{equation}\label{mnk1} m+n\leq k, \qquad \text{or  }\, n+k\leq m,\qquad \text{or }\, k+m\leq n, \end{equation}
are satisfied, $S_{mnk}$ may be nonzero.
To validate Assumption ${\bf H6}$, we follow a similar proof as Assumption ${\bf H5}$ in section \ref{Proof_H4}, combining the idea used in \cite{Rui}.
First consider $m\geq n$, by (\ref{S_mnk}) and (\ref{mnk}), then $\displaystyle \widetilde C\, m^q\, n^q \geq \bigg(\frac{k}{2}\bigg)^q\, |S_{mnk}|\, n^{q-p}$, thus
\begin{equation}\label{S1}\frac{k^{2q}}{m^q\, n^q}\, |S_{mnk}| \leq \widetilde C\, k^q\, n^{p-q}. \end{equation}
Now let $\chi_{mnk}$ be the indicator function of the set of indexes $(m,n,k)$ for which $S_{mnk} \neq 0$, namely
\begin{align}
&\label{chi}\displaystyle \chi_{mnk} = \begin{cases} 0, \qquad S_{mnk}=0, \\[2pt]
 1, \qquad S_{mnk}\neq 0,  \end{cases}
\end{align}
then
\begin{align*}
&\displaystyle \qquad \left|\,\sum_{k=1}^K\, k^{2q}\, \langle\partial_l^j \mathcal F_{k}(h^K, h^K), \, f_{k}\rangle_{L^2_{x,v}}\, \right|
  \\[2pt]
&\displaystyle \leq  \sum_{k=1}^K\, k^{2q}\sum_{m,n=1}^K \chi_{mnk}\, |S_{mnk}|\, C_s\sum_{|j_1|+|l_1|+|j_2|+|l_2|\leq s}\,
\left( \int_{\mathbb T^d}\, ||\partial_{l_2}^{j_2}h_m||_{\Lambda_v}^2\, ||\partial_{l_1}^{j_1}h_n||_{L^2}^2\, dx\right)^{1/2}\, ||f_k||_{\Lambda}\\[2pt]
&\displaystyle \leq \sum_{k,m,n=1}^K\,\frac{k^{2q}}{m^q\, n^q}\, |S_{mnk}|\, \chi_{mnk}\, C_s\, ||m^q h_m||_{H_{\Lambda}^s}\,||n^q h_n||_{H_{x,v}^s}\,
||f_k||_{\Lambda} \\[2pt]
&\displaystyle\leq\sum_{k,m,n=1}^K \, C_{s}\, \widetilde C\, n^{p-q}\, ||m^q h_m||_{H_{\Lambda}^s}\, ||n^q h_n||_{H_{x,v}^s}\,  ||k^q f_k||_{\Lambda}\,
\chi_{mnk}\\[2pt]
&\displaystyle\leq C_{s}\, \widetilde C\, C(\delta^{\prime}) \underbrace{\sum_{k,m,n=1}^K\, n^{p-q} \, ||m^q h_m||_{H_{\Lambda}^s}^2\, ||n^q h_n||_{H_{x,v}^s}^2\,\chi_{mnk}}_{I} +\,
C_{s}\, \widetilde C\, \delta^{\prime}\, \underbrace{\sum_{k,m,n=1}^K\, n^{p-q}\, ||k^q f_k||_{\Lambda}^2\,\chi_{mnk}}_{II} \\[2pt]
&\displaystyle \leq  C(\delta)\sum_{m=1}^K\, ||m^q h_m||_{H_{\Lambda}^s}^2 \, \sum_{n=1}^K\, ||n^q h_n||_{H_{x,v}^s}^2 + \delta\, \sum_{k=1}^K\, ||k^q f_k||_{\Lambda}^2\,.
\end{align*}
We used (\ref{G0}) and (\ref{G1}) in the first and the second inequalities, (\ref{S1}) and Young's inequality in the third and fourth inequalities, respectively.
$C_s, \, \delta^{\prime}, \, \delta, \, C(\delta^{\prime}), \, C(\delta)$ are all positive constants independent of $K$.
In the last inequality, we used the following arguments that are first shown in \cite{Rui}:
\begin{equation}\label{II} I \leq 2 \sum_{m=1}^K\, ||m^q h_m||_{H_{\Lambda}^s}^2 \cdot \sum_{n=1}^K\, ||n^q h_n||_{H_{x,v}^s}^2\,, \qquad
II \leq c\, \sum_{k=1}^K\, ||k^q f_k||_{\Lambda}^2\,, \end{equation}
with $c$ a constant independent of $K$.
To get (\ref{II}), one writes $$I=\sum_{m=1}^{K}\, ||m^q h_m||_{H_{\Lambda}^s}^2\, I_m, \qquad
I_m= \sum_{n,k=1}^{K}\, n^{p-q}\, ||n^{q}h_n||_{H^s_{x,v}}^2\, \chi_{mnk}\,.$$
By definition (\ref{chi}) and (\ref{mnk1}), $\chi_{mnk}=1$ indicates that $m-n\leq k\leq m+n$ by (\ref{mnk}), so $I_{m}$ has at most $2n$ choices for a fixed $n$. That is,
$$I_m\leq 2\sum_{n=1}^{K}\, n^{p-q+1}\, ||n^q h_n||_{H^s_{x,v}}^2 \leq 2 \sum_{n=1}^{K}\,  ||n^q h_n||_{H^s_{x,v}}^2\,,$$
if $q>p+1$. Similarly, $$II\leq 2\sum_{m=1}^{K}\, n^{p-q+1}\, \sum_{k=1}^{K}\, ||k^q f_k||_{\Lambda}^2\leq c\, \sum_{k=1}^K\, ||k^q f_k||_{\Lambda}^2\,,$$
since for each fixed pair $(n,k)$, there are at most $2n$ choices for $m$. If $q>p+2$, $c=2\sum_{n=1}^{\infty} n^{p-q+1}
\leq 2 (1+(p-q+2)^{-1})$. \,
For the terms with $m\leq n$, one exchanges the indexes $m$ and $n$ and can get the same conclusion. Thus we have validated Assumption ${\bf H6}$
for the Boltzmann equation. \\[2pt]

To get Theorem \ref{thm3}, we also need to take care of the linearized operator $\mathcal L$, which is an analog to the proof of Theorem \ref{thm2}.
For simplicity, we show the case $s=1$ with the energy estimate on $||h_k||_{\mathcal H_{\epsilon_{\perp}}^1}$, defined by
$$ ||h_k||_{\mathcal H_{\epsilon_{\perp}}^1}^2 = A\, ||h_k||_{L^2_{x,v}}^2 + \alpha\, ||\nabla_x h_k||_{L^2_{x,v}}^2 + b\, ||\nabla_v h_k^{\perp}||_{L^2_{x,v}}^2
+ a\, \epsilon \langle\nabla_x h_k, \,\nabla_v h_k\rangle_{L^2_{x,v}}\,.$$
For higher order Sobolev norm, one can refer to \cite{MB} and easily extend the conclusion.

When estimating $||h_k||_{L^2_{x,v}}^2$ for $k=1, \cdots, K$,  multiplying $k^{2q}\, h_k$ to both sides of (\ref{h_gPC1}) and integrating on $x, v$, then the term involving $\mathcal L$ is
\begin{equation}\label{L1}\frac{1}{\epsilon^2}\, \sum_{k=1}^{K}\, k^{2q}\, \langle\mathcal L(\sum_{i=1}^{K} \widetilde S_{ki}h_i), \, h_k\rangle_{L^2_{x,v}}\,. \end{equation}
Since the collision kernel is assumed to be linear in $z$, similar to the argument in (\ref{mnk}), $\widetilde S_{ki}$ may be nonzero only when $i$ has three choices:
$$i=k-1, \, k, \, k+1, $$
or $k=i-1, \, i, \, i+1$, then \begin{equation}\label{ki}\frac{k}{2}\leq \frac{i+1}{2}\leq i. \end{equation}
Under the assumptions given in Theorem \ref{thm3}, one can see that $|\widetilde S_{kk}|\leq C_b$ and
\begin{equation}\label{ski} |\widetilde S_{ki}| \leq \xi\, ||z||_{L^{\infty}}\langle |\psi_k|, |\psi_i|\rangle_{L_z^2}\leq \xi\, C_z\, ||\psi_k||_{L^2_z}\, ||\psi_i||_{L^2_z}
=C_2(\xi), \qquad\text{if}\quad  k\neq i,  \end{equation}
where the constant $C_2(\xi)=\xi\, C_z= O(\epsilon)$.

If $i=k$, since $\widetilde S_{kk}=b$ and by using the coercivity property (\ref{coercivity}) and integrating on $x$,
one has $$\frac{1}{\epsilon^2}\, \sum_{k=1}^{K}\, k^{2q}\, \langle\mathcal L(h_k), h_k\rangle_{L^2_{x,v}}
\leq -\frac{\lambda}{\epsilon^2}\, \sum_{k=1}^{K}\,  ||k^q\, h_k^{\perp}||_{\Lambda}^2\,,$$
If $i\neq k$, then $i=k-1$ or $i=k+1$. Define
\begin{align}
&\label{chi1}\displaystyle \chi_{ki} = \begin{cases} 0, \qquad \widetilde S_{ki}=0, \\[2pt]
 1, \qquad\widetilde S_{ki}\neq 0,  \end{cases}
\end{align}
and use (\ref{LL}) to bound (\ref{L1}) by some positive terms,
\begin{align}
&\displaystyle\quad\frac{1}{\epsilon^2}\, \sum_{k=1}^{K}\, k^{2q}\, \langle \mathcal L(\sum_{i=1}^{K} \widetilde S_{ki} h_i), \, h_k\rangle_{L^2_{x,v}} \leq \frac{C^{\mathcal L}}{\epsilon^2}\, \sum_{k,i=1}^{K} k^{2q}\, ||\widetilde S_{ki}\, h_i||_{\Lambda}\, ||h_k||_{\Lambda}\notag\\[2pt]
&\displaystyle \leq \frac{C^{\mathcal L} C_2(\xi)}{\epsilon^2}\, \sum_{k,i=1}^{K} \frac{k^{2q}}{i^q}\, ||i^q h_i||_{\Lambda}\, ||h_k||_{\Lambda}
\leq \frac{C^{\mathcal L} C_2(\xi)}{\epsilon^2}\, 2^q\, \sum_{k,i=1}^{K}  ||i^q h_i||_{\Lambda}\, ||k^q h_k||_{\Lambda}\, \chi_{ki}\notag\\[2pt]
&\displaystyle \leq \frac{C^{\mathcal L} C_2(\xi) C_3(q)}{\epsilon^2}\, \sum_{k,i=1}^{K}  ||i^q h_i||_{\Lambda}\, \chi_{ki} + \frac{C^{\mathcal L} C_2(\xi)C_3(q)}{\epsilon^2}\, \sum_{k,i=1}^{K} ||k^q h_k||_{\Lambda}^2\, \chi_{ki}\notag\\[2pt]
&\displaystyle \label{LK}\leq \frac{2 C_1}{\epsilon^2}\, \sum_{i=1}^{K} ||i^q h_i||_{\Lambda}^2 + \frac{2 C_1}{\epsilon^2}\, \sum_{k=1}^{K} ||k^q h_k||_{\Lambda}^2
= \frac{4C_1}{\epsilon^2}\, \sum_{i=1}^{K} ||i^q h_i||_{\Lambda}^2\,,
\end{align}
where we use $\displaystyle\frac{k^{2q}}{i^q}\leq 2^q k^q$ by (\ref{ki}) and Young's inequality in the third and fourth inequalities, respectively. The fifth inequality is because $i,k$ have only 2 choices for a fixed $k,i$, respectively. $C_3(q)$ is a constant depending on $q$ and we denote the constant
$C_1=C^{\mathcal L}\, C(\xi)\,  C_3(q)=\mathcal O(\epsilon)$.

One observes that $C_1$ in the nominator can be cancelled with an $\epsilon$ in the denominator on the right-hand-side of (\ref{LK}), then the whole term is of $\mathcal O(1/\epsilon)$.  Note from (\ref{h_gPC1}) that the nonlinear term $\Gamma$ has coefficient $1/\epsilon$.
We combine (\ref{LK}) with $1/\epsilon$ multiplied to the second term on the right-hand-side of (\ref{H6_1}), wherein $f_k=h_k$ in (\ref{H6_1}) since $\sum_{k=1}^{K}\, ||k^q h_k||_{\Lambda}^2$
is estimated, then the rest of the proof is the same as that in Theorem \ref{thm2}.
The same estimate holds if substituting $h_k$ by $\nabla_x h_k$ in (\ref{LK}).

Let $C_4=\max(C_b, C_2(\xi))$. To get an estimate of $||\nabla_v h_k^{\perp}||_{L^2_{x,v}}^2$, by using (\ref{H1_1}), (\ref{H1}) and (\ref{H2}) in Assumption ${\bf H1}$ and ${\bf H2}$, one gets the following term involving $\mathcal L$:
\begin{align}
&\displaystyle \quad \frac{1}{\epsilon^2}\sum_{k=1}^{K}\, k^{2q}\, \langle\, \nabla_v\mathcal L(\sum_{i=1}^{K}\widetilde S_{ki}\, h_i^{\perp}), \, \nabla_v h_k^{\perp}\rangle_{L^2_{x,v}}
=\frac{1}{\epsilon^2} \sum_{k,i=1}^{K}\, k^{2q}\, \langle\, \nabla_v\mathcal L(\widetilde S_{ki}\, h_i^{\perp}), \, \nabla_v h_k^{\perp}\rangle_{L^2_{x,v}}\notag \notag\\[2pt]
&\displaystyle \leq \frac{C_4}{\epsilon^2}\sum_{k,i=1}^{K}\, k^{2q} \left( (C(\delta)\frac{\nu_1^{\Lambda}}{\nu_0^{\Lambda}} + \nu_4^{\Lambda})\, ||h_i^{\perp}||_{\Lambda}^2 +
(\delta\frac{\nu_1^{\Lambda}}{\nu_0^{\Lambda}}-\nu_3^{\Lambda})\, ||\nabla_v h_k^{\perp}||_{\Lambda}^2  \right) \notag \\[2pt]
&\displaystyle = \frac{C_4}{\epsilon^2}\sum_{k,i=1}^{K}\, \frac{k^{2q}}{i^{2q}} (C(\delta)\frac{\nu_1^{\Lambda}}{\nu_0^{\Lambda}} + \nu_4^{\Lambda})\, ||i^q\, h_i^{\perp}||_{\Lambda}^2 + \frac{C_4}{\epsilon^2}\sum_{k,i=1}^{K}\, (\delta\frac{\nu_1^{\Lambda}}{\nu_0^{\Lambda}}-\nu_3^{\Lambda})\, ||k^q\, \nabla_v h_k^{\perp}||_{\Lambda}^2 \notag\\[2pt]
&\displaystyle\label{LKV} \leq \frac{3\times 4^{q}\, C_4}{\epsilon^2}\, (C(\delta)\frac{\nu_1^{\Lambda}}{\nu_0^{\Lambda}} + \nu_4^{\Lambda}) \sum_{i=1}^{K} ||i^q\,  h_i^{\perp}||_{\Lambda}^2 + \frac{3 C_4}{\epsilon^2}\, (\delta\frac{\nu_1^{\Lambda}}{\nu_0^{\Lambda}}-\nu_3^{\Lambda})\sum_{k=1}^{K} ||k^q \, \nabla_v h_k^{\perp}||_{\Lambda}^2\,,
\end{align}
where we use (\ref{ki}) in the second equality, and the fact that both $k, i$ have only 3 choices for a fixed $i, k$, respectively. (\ref{LKV}) is similar to
the estimate $||\nabla_v h^{\perp}||_{L^2_{x,v}}$ used in the proof of Theorem \ref{thm2}, in the sense that $h$ is substituted by $k^q\, h_k$ and summing up $k=1, \cdots, K$.
The estimate for $\langle \nabla_x h_k, \nabla_v h_k\rangle_{L^2_{x,v}}$ is also similar in the above way to the estimate $\langle\nabla_x h,
\nabla_v h\rangle_{L^2_{x,v}}$ when proving Theorem \ref{thm2} in \cite{MB}.

Analogous to the proof of Theorem \ref{thm2}, we multiply by $k^{2q}$ on both sides of all the estimates and sum up $k=1, \cdots, K$,
then achieve the same result as Theorem \ref{thm2}, that is, the exponential decay of \, $\sum_{k=1}^{K}\, ||k^q h_k||_{H_{x,v}^s}^2$.
Therefore, Theorem \ref{thm3} is proved.    \\[1pt]

As a corollary, with the assumption (\ref{basis}) on the basis, using the Cauchy-Schwarz inequality and the norm definition (\ref{h_sup}), 
one gets the estimate for the gPC solution $h^K$, 
\begin{align}
&\displaystyle ||h^K||_{H_{x,v}^s L_z^{\infty}}^2 = ||\sum_{k=1}^{K} h_k\, \psi_k(z)||_{H_{x,v}^s L_z^{\infty}}^2
\leq C\, \sum_{k=1}^{K}\, k^{2p}\, ||h_k||_{H_{x,v}^s}^2 \notag\\[2pt]
&\label{h_K1}\displaystyle\qquad\qquad\quad\leq C\, \bigg(\sum_{k=1}^{K} k^{q}\, ||h_k||_{H_{x,v}^s}^2\bigg)\bigg(\sum_{k=1}^{K}\, k^{2(p-q)}\bigg)
\leq C^{\prime}\,\sum_{k=1}^{K}\, ||k^q h_k||_{H_{x,v}^s}^2,
\end{align}
since $q>p+2$, $C$ is shown in (\ref{basis}) and $C^{\prime}$ is a constant independent of $K$.
Therefore, $\displaystyle||h^K||_{H_{x,v}^s L_z^{\infty}}$ also decays exponentially in time with the same rate as $E^K(t)$, namely
\begin{equation}\label{h_K2}||h^K||_{H^s_{x,v}L_z^{\infty}} \leq \tilde\eta\, e^{-\tau t}
\end{equation}
in the incompressible Navier-Stokes scaling, and
\begin{equation}\label{h_K3} ||h^K||_{H^s_{x,v}L_z^{\infty}} \leq \tilde\eta\, e^{-\epsilon\tau t} 
\end{equation} in the acoustic scaling. Here $\tilde\eta = C^{\prime}\eta$. 
Define the norm for general function $g$, 
\begin{equation}\label{h_e} ||g||_{H_{x,v}^{s} L_z^2}^2 := \int_{I_z}\, ||g||_{H_{x,v}^s}^2 \pi(z)\, dz. 
\end{equation}
Then (\ref{h_K2}) and (\ref{h_K3}) further implies that 
\begin{corollary}
\label{Col}
Suppose all the assumptions in Theorem \ref{thm3} are satisfied, we have the following estimates for the gPC solution $h^K$: \\
(i) Under the incompressible Navier-Stokes scaling, 
\begin{equation*}
||h^K||_{H^s_{x,v}L_z^{\infty}} \leq \tilde\eta\, e^{-\tau t}, \qquad ||h^K||_{H^s_{x,v}L_z^2} \leq \tilde\eta\, e^{-\tau t}\,.  
\end{equation*}
(ii) Under the acoustic scaling, 
\begin{equation*}
||h^K||_{H^s_{x,v}L_z^{\infty}} \leq \tilde\eta\, e^{-\epsilon\tau t}, \qquad ||h^K||_{H^s_{x,v}L_z^2} \leq \tilde\eta\,  e^{-\epsilon\tau t}\,,  
\end{equation*}
where $\tilde\eta$, $\tau$ are positive constants independent of $K$ and $z$. 
\end{corollary}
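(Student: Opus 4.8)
The plan is to deduce Corollary \ref{Col} directly from Theorem \ref{thm3}, since the hard analytic work---the hypocoercivity estimate yielding exponential decay of the weighted energy $E^K(t)=\sum_{k=1}^K ||k^q h_k||_{H_{x,v}^s}^2$ in (\ref{E^K})---is already in hand. What remains is purely to translate the decay of $E^K$ into decay of the two physical norms $||h^K||_{H^s_{x,v}L_z^\infty}$ and $||h^K||_{H^s_{x,v}L_z^2}$, keeping all constants independent of $K$ and $z$.

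First I would establish the $L_z^\infty$ bounds, which are exactly (\ref{h_K2}) and (\ref{h_K3}). Writing $h^K=\sum_{k=1}^K h_k\psi_k(z)$, taking the $H_{x,v}^s$ norm and the supremum over $z\in I_z$, I would invoke the growth assumption (\ref{basis}), $||\psi_k||_{L^\infty}\leq Ck^p$, together with the Cauchy--Schwarz inequality, inserting and removing a factor $k^q$ as in (\ref{h_K1}):
\begin{equation*}
||h^K||_{H_{x,v}^s L_z^\infty}^2 \leq C\sum_{k=1}^K k^{2p}\,||h_k||_{H_{x,v}^s}^2 \leq C\Big(\sum_{k=1}^K k^{2(p-q)}\Big)\Big(\sum_{k=1}^K ||k^q h_k||_{H_{x,v}^s}^2\Big).
\end{equation*}
The series $\sum_k k^{2(p-q)}$ converges because $q>p+2$, so its partial sums are bounded by a $K$-independent constant, yielding $||h^K||_{H_{x,v}^s L_z^\infty}^2\leq C' E^K(t)$. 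Combining this with the two decay rates of Theorem \ref{thm3} then reproduces the $L_z^\infty$ estimate in each scaling with a $K$- and $z$-independent constant $\tilde\eta=C'\eta$.

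Next I would treat the $L_z^2$ bounds. Starting from the definition (\ref{h_e}), $||g||_{H_{x,v}^s L_z^2}^2=\int_{I_z}||g||_{H_{x,v}^s}^2\,\pi(z)\,dz$, and using that $\pi$ is a probability density with $\int_{I_z}\pi(z)\,dz=1$, I would bound the integrand by its supremum in $z$:
\begin{equation*}
||h^K||_{H_{x,v}^s L_z^2}^2 \leq \Big(\sup_{z\in I_z}||h^K||_{H_{x,v}^s}^2\Big)\int_{I_z}\pi(z)\,dz = ||h^K||_{H_{x,v}^s L_z^\infty}^2.
\end{equation*}
Hence the $L_z^2$ norm is controlled by the $L_z^\infty$ norm and inherits the same exponential decay, completing both (i) and (ii).

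I do not expect a genuine obstacle here: the statement is a corollary precisely because the spectral and hypocoercive content is carried by Theorem \ref{thm3}. The only point requiring care is confirming that the constant $C'$ in the passage from $\sum_k k^{2p}||h_k||^2$ to $E^K(t)$ is independent of $K$, which is exactly what the hypothesis $q>p+2$ guarantees through convergence of $\sum_k k^{2(p-q)}$; the boundedness of $I_z$ and the normalization of $\pi$ then make the $L_z^2$ step immediate.
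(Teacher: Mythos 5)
Your proposal is correct and follows essentially the same route as the paper: the $L_z^\infty$ bound via the basis growth condition (\ref{basis}) and the convergence of $\sum_k k^{2(p-q)}$ for $q>p+2$ is exactly the computation in (\ref{h_K1})--(\ref{h_K3}), and the $L_z^2$ bound by the $L_z^\infty$ norm using $\int_{I_z}\pi(z)\,dz=1$ is the same step the paper uses (as in (\ref{Hz})). No gaps.
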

For other kinetic models like the Landau equation, the proof is similar and we omit it here.

\subsection{Estimate of the gPC Error}
\label{subsec:gPC}
We first give the main result of this section on the estimate of the gPC error:
\begin{theorem}
\label{thm4}
Suppose the assumptions on the collision kernel and basis functions in Theorem \ref{thm3}, and the assumption for initial data 
$||h_{in}||_{H_{x,v}^{s,r} L_z^{\infty}} \leq C_{I}$ are satisfied, we then have the following: \\
(i)\, Under the incompressible Navier-Stokes scaling,
\begin{equation}\label{Thm4_1}  ||h^e||_{H_{x,v}^{s} L_z^2} \leq C_{e}\, \frac{e^{-\lambda t}}{K^r}\,. \end{equation}
(ii)\, Under the acoustic scaling, 
\begin{equation}\label{Thm4_2} ||h^e||_{H_{x,v}^{s} L_z^2} \leq C_{e}\, \frac{e^{-\epsilon\lambda t}}{K^r}\,, \end{equation}
where the constants $C_{e}, \,\lambda>0$ are independent of $K$ and $\epsilon$.
\end{theorem}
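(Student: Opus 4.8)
The plan is to split the gPC error into a truncation part and a Galerkin part, dispose of the first by spectral approximation theory together with the regularity already established, and control the second by a weighted hypocoercivity estimate of the kind used for Theorem~\ref{thm3}. Concretely, writing $P_K$ for the projection in (\ref{proj}), I would decompose $h^e = h - h^K = \rho + \zeta$ with $\rho = h - P_K h$ (supported on gPC modes $k\ge K+1$) and $\zeta = P_K h - h^K$ (supported on modes $1\le k\le K$). For $\rho$ the bound is immediate: by Theorem~\ref{thm2} (random initial data) and the extension of subsection~\ref{kernel} (random kernel), the exact solution obeys $\|h\|_{H^s_{x,v}H^r_z}\le C_I\, e^{-\tau_s t}$, and the standard spectral accuracy of the orthogonal projection gives $\|\rho\|_{H^s_{x,v}L^2_z}\le C\,K^{-r}\|h\|_{H^s_{x,v}H^r_z}\le C_I\,K^{-r}e^{-\tau_s t}$, which is already of the desired form (\ref{Thm4_1}). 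It remains to prove the same bound for $\zeta$.

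Next I would derive the equation for $\zeta$ by subtracting the gPC-SG system (\ref{h_gPC1}) from the $P_K$-projection of (\ref{INS-scaling}). Using linearity of $\mathcal L$ and bilinearity of $\mathcal F$, and $h^e=\rho+\zeta$, this yields
\begin{equation*}
\partial_t\zeta + \tfrac{1}{\epsilon}v\cdot\nabla_x\zeta = \tfrac{1}{\epsilon^2}P_K\mathcal L(\zeta) + \tfrac{1}{\epsilon^2}P_K\mathcal L(\rho) + \tfrac{1}{\epsilon}P_K\big[\mathcal F(h^e,h)+\mathcal F(h^K,\,h^e)\big],
\end{equation*}
with $\zeta(0)=0$ since the gPC initial data are exactly the projected modes of $h_{in}$. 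I would then run the weighted energy estimate of subsection~\ref{gPC1} on $\zeta$: the term $\tfrac{1}{\epsilon^2}P_K\mathcal L(\zeta)$ supplies the full hypocoercive dissipation $-K_0\|\zeta\|_{H^{s}_\Lambda}^2$ (built, as in the proof of Theorem~\ref{thm3}, from the strong $\epsilon^{-2}\|\zeta^\perp\|_\Lambda^2$ term and the $\mathcal O(1)$ spatial term). The nonlinear commutator $\mathcal F(h^e,h)+\mathcal F(h^K,h^e)$ is handled through Assumption~{\bf H6} (\ref{H6_1}): splitting $h^e=\rho+\zeta$, the $\zeta$-contributions carry a factor $\|h\|+\|h^K\|$ and are absorbed into the dissipation once $C_I$ is small (using the decay of $\|h\|$, $\|h^K\|$ from Theorems~\ref{thm2} and \ref{thm3}), while the $\rho$-contributions produce an exponentially decaying source of size $\lesssim K^{-r}e^{-2\tau_s t}$.

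The main obstacle is the singular source $\tfrac{1}{\epsilon^2}P_K\mathcal L(\rho)$, which a priori carries a forbidding $\epsilon^{-2}$. Here I would exploit the linear-in-$z$ structure (\ref{b1}), $b=b_0+b_1 z$ with $|b_1|=\mathcal O(\epsilon)$, which makes $\mathcal L = \mathcal L_0 + z\mathcal L_1$; since multiplication by $z$ couples only neighbouring gPC modes (three-term recurrence) and $\rho$ has no modes below $K+1$, the projection $P_K\mathcal L(\rho)$ is fed solely by the single mode $\hat h_{K+1}$ through one application of $z\mathcal L_1$, so its size is $\mathcal O(\epsilon)\,\|\hat h_{K+1}\|$. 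Moreover $\mathcal L_1$, like the full collision operator, maps into $N(\mathcal L)^\perp$ by the collision-invariant identity (\ref{LN}), so in the energy pairing only $\zeta^\perp$ appears, as in (\ref{LLL}). Combining these two facts with Young's inequality converts $\tfrac{1}{\epsilon^2}\langle P_K\mathcal L(\rho),\zeta\rangle$ into $\tfrac{\delta}{\epsilon^2}\|\zeta^\perp\|_\Lambda^2$ (absorbed by the dissipation) plus a source $\tfrac{C}{\delta}\|\hat h_{K+1}\|^2 \le C_I\,K^{-2r}e^{-2\tau_s t}$ coming from the $z$-regularity of $h$. A Grönwall argument then closes the estimate: with $\zeta(0)=0$, exponential dissipation, and an exponentially decaying source of size $K^{-2r}$, I obtain $\|\zeta\|_{H^s_{x,v}L^2_z}\le C\,K^{-r}e^{-\lambda t}$, and the triangle inequality $\|h^e\|\le\|\rho\|+\|\zeta\|$ gives (\ref{Thm4_1}). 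Part (ii) follows verbatim after multiplying the right-hand sides by $\epsilon$, which replaces the rates $\tau_s,\lambda$ by $\epsilon\tau_s,\epsilon\lambda$ and yields (\ref{Thm4_2}).
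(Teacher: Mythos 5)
Your proposal is correct and follows essentially the same route as the paper: the same splitting $h^e=R^K+e^K$, the projection error controlled by Theorem \ref{thm2} and spectral approximation, the error equation for the Galerkin part driven by the hypocoercive dissipation of $\tfrac{1}{\epsilon^2}\mathcal L$, the linear-in-$z$/$\mathcal O(\epsilon)$ structure of $b$ used to tame $\tfrac{1}{\epsilon^2}\mathcal L_k(R^K)$, the bilinear splitting $\mathcal F(h,h)-\mathcal F(h^K,h^K)=\mathcal F(h-h^K,h)+\mathcal F(h^K,h-h^K)$, and a Gr\"onwall closure. If anything, your treatment of the singular source $\tfrac{1}{\epsilon^2}P_K\mathcal L(R^K)$ — invoking its range in $N(\mathcal L)^\perp$ and absorbing it into the $\epsilon^{-2}\|\zeta^\perp\|_\Lambda^2$ dissipation via Young's inequality — makes explicit a step the paper only sketches.
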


Recall the reconstructed gPC solution defined in (\ref{ans}) and the projection operator in (\ref{proj}). The total gPC error is given by
\begin{equation}\label{Total_err}h^{e}=h-h^{K}:=\underbrace{h-P_{K} h}_{R^K}\, + \underbrace{P_{K} h-h^K}_{e^K}\,, \end{equation}
where $R^K$ is the projection error and $e^K$ is the numerical error.

By Theorem \ref{thm2} and the standard estimate on the projection error,
\begin{equation}\label{PE}
||R^K||_{H_{x,v}^s L_z^2}\leq ||R^K||_{H_{x,v}^s L_z^{\infty}} \leq C_{P}\, \frac{||h||_{H_{x,v}^{s} H_z^{r+1}}}{K^r} \leq C_{P}\, C_{I}\,
\frac{e^{-\tau_s t}}{K^r}\,,\end{equation}
where $C_P$, $C_I$ are constants. 
Define 
\begin{equation}\label{eK_Def} e^K := P_{K} h -h^K = \sum_{k=1}^{K}\, (\hat h_k(t,x,v)-h_k(t,x,v))\, \psi_k(z):=\sum_{k=1}^{K}\, e_k(t,x,v)\, \psi_k(z)\,, 
\end{equation} where one defines the coefficients of $e^K$ by
$$ e_k = \hat h_k -h_k, \qquad 1\leq k\leq K, \qquad {\bf e}=(e_1, \cdots, e_K)^{T}. $$

We discuss the incompressible Navier-Stokes scaling below.
In order to get the hypocoercivity estimate for $h^e$, one needs to write down the gPC system for the numerical error $e_k$
\begin{align}
\label{e_k}
\left\{
\begin{array}{l}
\displaystyle \partial_t e_k +  \frac{1}{\epsilon}\, v\cdot\nabla_x e_k =\frac{1}{\epsilon^2}\left(\mathcal L_{k}(h) -\mathcal L_{k}(h^K)\right) +
\frac{1}{\epsilon}( \mathcal F_{k}(h,\,h) -\mathcal F_{k}(h^K, h^K))\,,  \\[4pt]
\displaystyle e_{k}(0,x,v)=0,  \qquad x\in\Omega\subset\mathbb T^d,  \, v\in \mathbb R^d.
\end{array}\right.
\end{align}
Since 
$$\mathcal L_{k}(h)-\mathcal L_{k}(h^K) = \mathcal L_{k}(h-P_{K}h+P_{K}h-h^K) = \mathcal L_{k}(e^K) + \mathcal L_{k}(R^K). $$
By the estimate (\ref{PE}), the expression for $\mathcal L_{k}$ given in (\ref{LFK}), 
we observe that the term 
involving $S_{kk}$ in $\mathcal L_k(R^K)$ equals to zero because of the assumption that leading order of random collision kernel 
being constant and definition of $R^K$; and the term involving $S_{ki}$ for $i=k-1$ or $i=k+1$ contributes to an order $\epsilon$ coefficient thanks to the
small random perturbation assumption (\ref{b1}) of the collision kernel, thus the estimate for $\mathcal L_{k}(R^K)$ is given by
\begin{equation}\label{LRK} ||\mathcal L_{k}(R^K)||_{H_{x,v}^s L_z^2} \leq C\, \epsilon\,  \frac{e^{-\tau_s t}}{K^r}, 
\end{equation}
where $C$ is a constant independent of $K$ and $z$. 

Based on Corollary 1 to Theorem 3 shown in \cite{Lu} (see also \cite{BD}), if $B$ satisfies the assumption (\ref{BK1}) on
the uniform boundness of  $z$-derivatives of the collision kernel, then for any $f, g \in L_v^1\cap L_v^2$\,,
$$||\mathcal Q(f,\, g)||_{H^s_{x,v}}\leq C_{\text{ker}}\, ||f||_{L^2_{x,v}}\, ||g||_{L^2_{x,v}}\,,$$
where $C_{\text{ker}}>0$ depending only on the collision kernel and is independent of $z$.
Using (\ref{GB}), one gets
\begin{equation}\label{F_Est}
||\mathcal F(f,\,g)||_{H_{x,v}^{s}} \leq C_{\text{ker}}\, ||f||_{L^2_{x,v}}\, ||g||_{L^2_{x,v}} \leq C_{\text{ker}}\, ||f||_{H_{x,v}^{s}}\, ||g||_{H_{x,v}^{s}}\,. 
\end{equation}

Notice that $e_k$ satisfies the similar estimate as (\ref{h_perp}) in section \ref{Proof_Thms}, except for the nonlinear terms involving the 
operator $\mathcal F$, whose estimate is given by (\ref{App_R2}) in the Appendix. 
Repeating the similar procedure as the proof for Theorem \ref{thm2}, using $ ||e^K||_{\mathcal H_{\epsilon_{\perp}}^s L_z^2}^2 
= \sum_{k=1}^{K} ||e_k||_{\mathcal H_{\epsilon_{\perp}}^s}^2$,  
and the estimate (\ref{LRK}) multiplying by the coefficient $1/\epsilon^2$, which is of order $1/\epsilon$ thus can be combined with the 
nonlinear estimates and is shown in the third term on the RHS of (\ref{e1}) below, 
one gets  
\begin{equation}
\label{e1}
\frac{1}{2}\frac{d}{dt}||e^K||_{\mathcal H_{\epsilon_{\perp}}^s L_z^2}^2
 \leq\left( -A_1 ||e^K||_{H^s_{x,v}L_z^2} + A_2\, e^{-\tau_1 t}\,
||e^K||_{H^s_{x,v}L_z^2} + A_3\, \frac{e^{-\tau_2 t}}{K^{r}}\right) ||e^K||_{H^s_{x,v}L_z^2}\,, \end{equation}
where
$\tau_2 = \tau_1 + \tau_s$, and $A_i>0$ ($i=1,2,3$) coming from $C_I$, $C_P$, $C_{\text{ker}}$
are all constants independent of $K$ and $\epsilon$. 
Choose the initial data $h_{in}$ with $||h_{in}||_{H_{x,v}^{s,r}L_z^{\infty}}\leq C_I$ such that $A_2<A_1$.
Since $||\cdot||_{\mathcal H_{\epsilon_{\perp}}^s}$ is equivalent to the $||\cdot||_{H^s_{x,v}}$ norm, thus
\begin{equation}\label{e2} 
\frac{d}{dt} ||e^K||_{H^s_{x,v}L_z^2} \leq - A_4\,  ||e^K||_{H^s_{x,v}L_z^2} + \widetilde A_3\, \frac{e^{-\tau_2 t}}{K^{r}}\,. 
\end{equation}

One usually chooses initial data such that ${\bf e}={\bf 0}$, then a simple Gronwall's inequality argument gives
\begin{equation}\label{RE} ||e^K||_{H^s_{x,v}L_z^2} \leq A_5\, \frac{e^{-\kappa t}}{K^{r}}\,,   \end{equation}
where $\widetilde A_3, \, A_4, \, A_5, \, \kappa>0$ are all constants, independent of $K$ and $\epsilon$. 
Combining (\ref{PE}) and (\ref{RE}), since $h^e=R^K + e^K$, one then has
$$ ||h^e||_{H_{x,v}^{s} L_z^2} \leq C_{e}\, \frac{e^{-\lambda t}}{K^r}\,. $$
where $C_e, \, \lambda>0$ are constants independent of $K$ and $\epsilon$. 
For the acoustic scaling, we multiply by $\epsilon$ on the right-hand-side of
(\ref{e1}) and (\ref{e2}) then obtain (\ref{Thm4_2}). Thus Theorem \ref{thm4} is proved.  \\[1pt]

We would like to point out that the above proof is different from \cite{Hu}. 
Our analysis gives sharper constants which are independent of $K$ and $\epsilon$. 
Theorem \ref{thm4} shows that the gPC-SG method for the Boltzmann equation with random inputs and both scalings is of spectral accuracy.
In addition, the total gPC error $h^e$ decays exponentially in time.

\begin{remark}
As all of our estimates were established in weighted $L_z^2$ or corresponding Sobolev norms, 
our time-decay results (for both analytic and numerical solutions) will hold for both expected value and variance of $h$. 
\end{remark}
\section{Conclusion}
\label{conclusion}
In this paper, we first give an exponential decay to the global equilibrium for both linear and nonlinear kinetic models with 
random uncertainties in the initial data and collision kernel, and with small
scales corresponding to both the incompressible Navier-Stokes and the Euler (acoustic)
scalings, using the theoretical framework developed in \cite{CN, MB} for deterministic problems. As an example we obtain the results for the Boltzmann equation, while similar results can also be obtained for other (random) linear and nonlinear kinetic equations whose deterministic counterparts
are covered in \cite{MB}.
Furthermore, for small random perturbation of the collision kernel, 
we prove the exponential time decay of the
gPC-SG solution, the spectral accuracy of the gPC-SG method as well as the exponential time decay of the numerical error, under some mild conditions on the orthogonal polynomials.

There remain many interesting questions that desire further research.
For example, whether one can establish a similar analysis for more general
orthogonal polynomials not satisfying condition (\ref{basis}), random variable
$z$ in unbounded domain, Cauchy problem in $\mathbb R^d$, and uncertainties
arising from boundary conditions for boundary value problems.


\appendix
\numberwithin{equation}{section}
\section{Proof of the estimate used in Theorem \ref{thm4}}

Take $\partial_l^j$ derivatives (with $|j|+|l|\leq s$) on equation (\ref{e_k}), multiply by $\partial_l^j e_k$ 
and sum up all the $k=1, \cdots, K$, then integrate on $x$ and $v$. 
Denote $dz_1 = \pi(z)dz$ and $d\mu = dxdv$,  the nonlinear term on the right-hand-side is given by 
\begin{align}
&\displaystyle \text{RHS} = \sum_{k=1}^{K} \int \partial_l^j \left(\mathcal F_{k}(h, h) -\mathcal F_{k}(h^K, h^K)\right)\, \partial_l^j e_k\, d\mu  \notag \\[4pt]
&\displaystyle \qquad = \sum_{k=1}^{K} \int \int\partial_l^j \left(\mathcal F(h, h) - \mathcal F(h^K, h^K)\right) \psi_k(z) dz_1\, \partial_l^j e_k\, d\mu  \notag\\[4pt]
&\displaystyle \qquad = \int \int \partial_l^j \left(\mathcal F(h, h) - \mathcal F(h^K, h^K)\right)\, \partial_l^j e^{K}\, d\mu dz_1 \notag\\[4pt]
&\displaystyle\label{App_R1} \qquad\leq  \underbrace{\left(\int\int (\partial_l^j \left(\mathcal F(h,\,h) - \mathcal F(h^K, h^K)\right))^2\, d\mu dz_1\right)^{\frac{1}{2}}}_{\textcircled{a}}
\cdot \underbrace{\left( \int \int (\partial_l^j e^K)^2\, d\mu dz_1\right)^{\frac{1}{2}}}_{\textcircled{b}}, 
\end{align}
where we used the definition of $\mathcal F_{k}$ in the first equality, (\ref{eK_Def}) in the second equality and
the Cauchy-Schwarz inequality in the last inequality. It is obvious that $\textcircled{b} \leq ||e^K||_{H_{x,v}^{s}L_z^2}$. Now we estimate term $\textcircled{a}$.

Using the relation
\begin{equation}\label{F_est} \mathcal F(h, h) -\mathcal F(h^K, h^K)= \mathcal F(h-h^K, h) + \mathcal F(h^K, h-h^K), 
\end{equation}
then 
\begin{align}
&\displaystyle \textcircled{a}^2 \leq ||\mathcal F(h, h) - \mathcal F(h^K, h^K)||_{H_{x,v}^{s}L_z^2}^2 \notag\\[4pt]
&\displaystyle\, \leq 2 \left( ||\mathcal F(h-h^K, h)||_{H_{x,v}^{s}L_z^2}^2 + ||\mathcal F(h^K, h-h^K)||_{H_{x,v}^{s}L_z^2}^2\right) \notag\\[4pt]
& \label{Term_a}\displaystyle\, \leq 2\, C_{\text{ker}}^2\, ||h-h^K||_{H_{x,v}^{s}L_z^2}^2 \left( ||h||_{H_{x,v}^{s}L_z^2}^2 + ||h^K||_{H_{x,v}^{s}L_z^2}^2 \right). 
\end{align}
By Theorem \ref{random_kernel} and Corollary \ref{Col}, we know
\begin{equation}\label{a1} ||h^K|| _{H_{x,v}^{s}L_z^2} \leq \tilde\eta\, e^{-\tau t}, \qquad ||h||_{H_{x,v}^{s}L_z^2} \leq C_{I}\, e^{-\tau_s t}. 
\end{equation}
Using (\ref{Total_err}) and the estimate of $R^K$ in (\ref{PE}), one has
\begin{equation}\label{a2} || h - h^K || _{H_{x,v}^{s}L_z^2}^2 \leq 2 \left( ||R^K||_{H_{x,v}^{s}L_z^2}^2 + ||e^K||_{H_{x,v}^{s}L_z^2}^2\right) 
\leq 2 \left( (C_P C_I)^2\, \frac{e^{-2 \tau_s t}}{K^{2r}} + ||e^K||_{H_{x,v}^{s}L_z^2}^2 \right).  \end{equation}

Plug in (\ref{a1}) and (\ref{a2}) into (\ref{Term_a}), then 
$$ \textcircled{a}^2 \leq (C^{\ast})^2\, e^{-2 \tau_1 t} \left( (C_P C_I)^2\, \frac{e^{-2 \tau_s t}}{K^{2r}} + ||e^K||_{H_{x,v}^{s}L_z^2}^2\right), $$
which gives 
$$\textcircled{a} \leq C^{\ast}\, e^{-\tau_1 t} \left( C_P C_I\, \frac{e^{-\tau_s t}}{K^r} +  ||e^K||_{H_{x,v}^{s}L_z^2}\right), $$
where $C^{\ast} = 2\sqrt{2}\, C_{\text{ker}}\max\{\tilde\eta, C_{I}\}$. 
With the estimate of term $\textcircled{a}$, $\textcircled{b}$ in (\ref{App_R1}), we conclude that 
\begin{equation}\label{App_R2} \text{RHS} \leq C^{\ast}\, e^{-\tau_1 t} \left( C_P C_I\, \frac{e^{-\tau_s t}}{K^r} +  ||e^K||_{H_{x,v}^{s}L_z^2} \right)\, ||e^K||_{H_{x,v}^{s}L_z^2}\,. 
\end{equation}

\bibliographystyle{siam}
\bibliography{main.bib}
\end{document}